\documentclass[11pt,
]{amsart}
\usepackage{
amssymb, graphicx, color, cite
}
\usepackage[T1]{fontenc}

\date{\today}
\usepackage[linkcolor=blue, citecolor=black, breaklinks=true]{hyperref}

\usepackage{mathtools}

\mathtoolsset{showonlyrefs}
\renewcommand{\r}[1]{\eqref{#1}}
\numberwithin{equation}{section}%

\newtheorem{theorem}{Theorem}
\newtheorem{lemma}{Lemma}[section]
\newtheorem{proposition}[lemma]{Proposition}
\newtheorem{corollary}[lemma]{Corollary}

\theoremstyle{definition}

\DeclareMathOperator{\tr}{tr}

\DeclareMathOperator{\supp}{supp}

\newcommand{\eps}{\varepsilon}

\newcommand{\R}{{\mathbb R}}
\renewcommand{\S}{{\mathbb S}}
\newcommand{\C}{{\mathbb C}}
\newcommand{\Id}{\mbox{\rm Id}}

\newcommand{\be}[1]{\begin{equation}\label{#1}}
\newcommand{\ee}{\end{equation}}

\renewcommand{\d}{\mathrm{d}}

\renewcommand{\i}{\mathrm{i}}

\newcommand{\bo}{{\partial M}}

\newcommand{\B}{\mathcal{B}}

\newcommand{\e}{\mathsf{e}}

\title[Local rigidity  for the anisotropic Helmholtz equation at fixed energy]{Local rigidity of conformally Euclidean metrics for the anisotropic Helmholtz equation at fixed energy}

\author[S. Mu\~noz-Thon]{Sebasti\'an Mu\~noz-Thon}
\address{Universit\'e Paris-Saclay, Laboratoire de math\'ematiques d'Orsay, 91405, Orsay, France}
\email{sebastian.munoz-thon@universite-paris-saclay.fr}

\author[P. Stefanov]{Plamen Stefanov}
\address{Department of Mathematics, Purdue University, West Lafayette, IN 47907}
\email{stefanov@math.purdue.edu}

\DeclareMathOperator{\diver}{div}

\begin{document}
\begin{abstract}
We prove local rigidity of smooth conformally Euclidean metrics $g_0$ on smooth compact domains $M\subset\R^n$, $n\ge3$, for the anisotropic Helmholtz equation $(-\Delta_g -k^2)u =0$ at a fixed energy $k^2>0$ which is not a Dirichlet eigenvalue of $-\Delta_{g_0}$ in $M$. Any smooth metric sufficiently close to $g_0$ in $H^s(M)$, with integer $s>n/2+1$, and having the same Dirichlet-to-Neumann map, is its pullback by a smooth diffeomorphism fixing the boundary pointwise. The proof uses a Helmholtz embedding to fix the gauge and a compactness argument together with a characterization of the kernel of the linearization.
\end{abstract}

\maketitle

\section{Introduction}\label{sec1} 
Let $M\subset\mathbb R^n$, $n\ge3$, be a smooth compact domain, and let $\mathsf e$ denote the Euclidean metric. Let $k>0$ be fixed.  
For a smooth Riemannian metric $g$ on $M$, we define the Dirichlet-to-Neumann (DN) map, for $f\in C^\infty(\bo)$, by
\[
\Lambda_g(k) f=\left.\partial_{\nu_g}u\right|_{\bo},
\]
where  $u$ is the unique solution of the Dirichlet problem
\begin{align}
(-\Delta_g -k^2)u& =0\quad \text{in }M,\\
u&=f  \quad \text{on }\bo,
\end{align}
assuming that $k^2$ is not a Dirichlet eigenvalue of $-\Delta_g$ in $M$.  
Here,  $\nu_g$ is the outward unit normal with respect to $g$. 
We prove  that for every smooth conformal factor $c$ in $M$, if $g$ is sufficiently close to $g_0\colon=e^{2c}\mathsf e$ in $H^s(M)$, with integer $s>n/2+1$, and if $\Lambda_g(k)=\Lambda_{g_0}(k)$, then $g$ is isometric to $g_0$ by a diffeomorphism fixing the boundary pointwise.

\begin{theorem}\label{thm2}
Let $M\subset\R^n$, $n\ge3$, be a smooth compact domain, let $g_0=e^{2c}\e$ with $c\in C^\infty(M)$, and fix an integer $s>n/2+1$. Fix $k\ge 0$ such that $k^2$ is not a Dirichlet eigenvalue for $-\Delta_{g_0}$ in $M$. 
There exists $\varepsilon_0>0$, depending on $M$, $g_0$, $k$, $s$, such that every smooth Riemannian metric $g$ on $M$ satisfying
\[
\|g-g_0\|_{H^s(M)}<\varepsilon_0
\]
and
\begin{equation}\label{15}
\Lambda_g(k)=\Lambda_{g_0}(k)
\end{equation}
has the form $g=\Phi^*g_0$ for a smooth diffeomorphism $\Phi\colon M\to M$ fixing $\partial M$ pointwise.
\end{theorem}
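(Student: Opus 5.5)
We follow the strategy announced in the abstract: fix the gauge by a Helmholtz embedding, reduce to a linearized problem, and close the argument by compactness. Since $g$ and $g_0$ share the DN map, they share boundary data. Take a boundary-determination result (either earlier in the paper or standard): $\Lambda_g(k)$ determines the full Taylor series of $g$ at $\bo$ in boundary normal coordinates. After a diffeomorphism close to the identity and fixing $\bo$ pointwise, we may therefore assume $g-g_0$ vanishes to infinite order at $\bo$.

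\emph{Gauge fixing.} Choose $n$ (or $n+1$) boundary data $f_1,\dots,f_N$, namely the traces of $k$-harmonic functions $u^0_j$ for $g_0$ that form an embedding $U_0=(u^0_1,\dots,u^0_N)\colon M\to\R^N$ (a local diffeomorphism onto its image, injective). For the conformally Euclidean $g_0$ these can be built from the Euclidean coordinates by a small perturbation, or from exponential/CGO-type solutions. Let $u_j$ solve $(-\Delta_g-k^2)u_j=0$ with $u_j|_{\bo}=f_j$. For $g$ close to $g_0$ in $H^s$ with $s>n/2+1$, elliptic estimates give $u_j$ close to $u_j^0$ in $C^1$, so $U=(u_j)$ is still an embedding. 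Equality of DN maps yields equal Cauchy data. Define $\Phi$ so that $U=U_0\circ\Phi$, which forces $\Phi|_{\bo}=\Id$. Replacing $g$ by $\tilde g=(\Phi^{-1})^*g$, the functions $u_j^0$ become $k$-harmonic for both $\tilde g$ and $g_0$. This is the gauge (harmonic-type) condition, and in it $h=\tilde g-g_0$ satisfies a linear constraint $L_{g_0}h=0$ up to quadratic errors. Moreover $\|h\|_{H^s}\lesssim\|g-g_0\|_{H^s}$ by the regularity of $\Phi$.

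\emph{Linearization and kernel.} The map $h\mapsto \Lambda_{g_0+h}(k)$ has derivative
\[
\int_M \langle h, \d u\otimes\d v-\tfrac12(\langle\d u,\d v\rangle_{g_0}-k^2uv)g_0\rangle\,\d V_{g_0}
\]
tested on pairs of solutions $u,v$. The key step is a characterization of the kernel: if $h$ is in gauge and this pairing vanishes for all solutions, then $h=0$. (Without the gauge, the kernel consists of $\mathcal L_Xg_0$ with $X|_{\bo}=0$.) For conformally Euclidean $g_0$, one conjugates to a Schrödinger-type operator on Euclidean space. Using products of CGO solutions $e^{x\cdot\zeta}(1+r)$ with $\zeta\cdot\zeta=0$ then shows that the Fourier transform of $e^{\alpha c}h$, contracted with $\zeta\otimes\zeta$, vanishes. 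This forces $h$ to be a potential tensor $\mathcal L_Xg_0$, and the gauge condition (an elliptic equation for $X$ with zero boundary data, which is invertible because $k^2$ is not a Dirichlet eigenvalue) then gives $X=0$. This is the main obstacle. The CGO construction for the first-order perturbation and the step showing solenoidal injectivity in $n\ge3$ both require care, and the linearization must be genuinely injective on the gauge slice.

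\emph{Compactness.} Suppose the conclusion fails. Then there are $g_m\to g_0$ in $H^s$ with equal DN maps, whose gauged versions give $h_m\ne0$ and $h_m\to0$. Write $0=\Lambda_{g_0+h_m}-\Lambda_{g_0}=D\Lambda(h_m)+O(\|h_m\|^2)$ in suitable norms, together with $L_{g_0}h_m=O(\|h_m\|^2)$. Normalize $\hat h_m=h_m/\|h_m\|_{H^{s}}$. After passing to a subsequence, $\hat h_m\to\hat h$ strongly in $H^{s-1}$. To get $\|\hat h\|_{H^{s-1}}$ bounded from below, use the injectivity of the linearization together with the ellipticity of the gauge system: combine $D\Lambda$ with $L_{g_0}$ into a stability estimate $\|h\|_{H^{s-1}}\le C(\|D\Lambda h\|+\|L_{g_0}h\|)+C\|h\|_{H^{s-2}}$, or use interpolation. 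Since $\hat h$ lies in the kernel and is in gauge, the previous step gives $\hat h=0$. This contradicts the normalization once the quadratic errors are controlled by $\|h_m\|_{H^s}^2$, which is where $s>n/2+1$ (the algebra property) enters. Hence $h=0$ for $\varepsilon_0$ small, that is $g=\Phi^*g_0$. Finally, $\Phi$ is smooth because $g$ is smooth and the $u_j$ are smooth.
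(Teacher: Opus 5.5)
Your overall architecture (gauge, kernel of the linearization equal to potential tensors, compactness) matches the paper. The compactness step, however, which is the heart of the proof, has a genuine gap.

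Normalizing $h_m/\|h_m\|_{H^s}$ and extracting an $H^{s-1}$-convergent subsequence gives a contradiction only if the limit is nonzero. The estimate you propose for this, $\|h\|_{H^{s-1}}\le C(\|D\Lambda h\|+\|L_{g_0}h\|)+C\|h\|_{H^{s-2}}$, cannot hold in the operator norm, or in any norm in which $h\mapsto D\Lambda h$ is compact. Consider the infinite-dimensional closed space of gauge-slice tensors supported in a fixed compact subset of the interior. There $L_{g_0}h=0$, and $h\mapsto D\Lambda h$ is compact, since interior gradients of solutions with bounded Dirichlet data form a precompact set. The estimate would then make the identity on that space compact. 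This is the severe ill-posedness of the linearized Calder\'on problem, and a reference to ``interpolation'' does not replace it.

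What is missing is a quantitative \emph{nonlinear} high-frequency bound. The paper normalizes in $L^2$, $H_j=B_j/\|B_j\|_{L^2}$, and proves in Lemma~\ref{lemma14}
\[
\|\mathbf 1_{\{|\xi|>T\}}\widehat B\|_{L^2}\le C\bigl(R^{n/2+1}b_2^2+T^{-1}b_2+R^{-s}b_s\bigr).
\]
It does this by inserting CGO solutions for $g_0$ and for $\overline g$ into the exact Alessandrini identity.
\begin{itemize}
\item The CGOs for $\overline g$ come from a perturbative continuation valid only while $\tau\|B\|_{L^\infty}$ is small. They therefore reach frequencies up to $R\sim c_0/b_\infty$, and the $H^s$ bound handles the tail beyond $R$.
\item CGO products only see $\widehat B(\xi)$ on complex null pairs, so the longitudinal part $\widehat B(\xi)\xi$ must be controlled separately. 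The gauge provides this through $\|\diver B\|_{L^2}\le C\|B\|_{L^2}$ (Lemma~\ref{lemma8}) together with the polarization Lemma~\ref{lemma12}.
\item The interpolation $b_\infty\le Cb_2^{1/s}b_s^{1-1/s}$ lets you choose $R$ so that every error term is $o(b_2)$. This balancing is where $s>n/2+1$ is really used, not merely the algebra property.
\end{itemize}
The result is $L^2$-precompactness of $H_j$ with a unit-norm limit.

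The gauge step also fails as written. In general you need $N>n$ functions. Nothing guarantees $n$ global $k$-harmonic coordinates for $g_0$. The Euclidean coordinates are not approximately $k$-harmonic for $e^{2c}\e$, since $(\Delta_{g_0}+k^2)x^j=(n-2)e^{-2c}\partial_jc+k^2x^j$, which is not small unless $c$ and $k$ are. With $N>n$, generically $U(M)\not\subset U_0(M)$, so $U=U_0\circ\Phi$ has no solution.

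The paper instead sets $\Phi=F^{-1}\circ\pi\circ F_g$, with $\pi$ the nearest-point projection onto $F(U)$. The gauge condition then reads $(\Delta_{\overline g}+k^2)(F+Z)=0$, with $Z$ normal to $F(U)$ and $Z|_{\bo}=0$. This requires $\|Z\|_{H^1}\le C\|B\|_{L^2}$ (Lemma~\ref{lemma6}) and a tangential projection to obtain the divergence bound. At the end, the weak limit $V$ of $Z_j/\|B_j\|_{L^2}$ equals $-DF\,X$ and is normal, which forces $X=0$.

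Finally, CGO testing of the linearized identity only yields $H=DX+DX^\top+\beta\e$. You still have to remove the conformal part, which is Lemma~\ref{lemma18} with the $k^2$-modified product identity. You also have to show that $X$ vanishes outside $M$, which the paper does by varying the background there.
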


Theorem~\ref{thm2} extends the result in \cite{MS26} from $k=0$ to $k\ge0$ with a proof following that in \cite{MS26}. For a real energy $\lambda$ below the lowest Dirichlet eigenvalue of $-\Delta_{g_0}$, including $\lambda<0$, local rigidity for $-\Delta_g-\lambda$ follows from \cite[Corollary~1.1]{MS26}. The conformal factor supplied by that corollary equals one by unique continuation, since its difference from one satisfies a linear elliptic equation with zero Cauchy data. Recently, OpenAI announced \cite{OpenAI26} a solution of the anisotropic  Calder\'on problem for $k=0$ but at the time of writing this paper, the proof is still being checked for accuracy.

Ceki\'c, Krupchyk, Sahoo, and Uhlmann \cite{CKSU26} showed recently that, for non-trapping metrics with strictly convex boundary, equality of Helmholtz DN maps at a sufficiently large frequency implies equality of lens data. The frequency threshold depends on the two metrics; metric uniqueness then follows under assumptions ensuring lens rigidity. In fact, under these lens rigidity assumptions, their result is equivalent to uniqueness, modulo the gauge, given the DN maps over a sequence $k_j\to\infty$.

For a smooth metric $g$ equal to $\e$ outside a ball, let $S_g(k)$ denote the scattering matrix of $-\Delta_g$ at energy $k^2$, see, e.g., \cite[Sections~4.1 and~4.4]{DZ-book}. Equality of these matrices is equivalent to equality of the scattering amplitudes for all incident and observation directions. Theorem~\ref{thm2} has the following consequence.

\begin{corollary}\label{cor2}
Let $B\subset\R^n$, $n\ge3$, be a ball, and let $g_0=e^{2c}\e$, where $c\in C_0^\infty(B)$. Fix $k>0$ and an integer $s>n/2+1$. There exists $\eps_0>0$, depending on $B,g_0,k,s$, such that every smooth Riemannian metric $g$ on $\R^n$ satisfying
\[
g=\e\quad\text{outside }B,\qquad \|g-g_0\|_{H^s(\R^n)}<\eps_0,
\]
and $S_g(k)=S_{g_0}(k)$, has the form $g=\Phi^*g_0$, where $\Phi\colon\R^n\to\R^n$ is a smooth diffeomorphism equal to the identity outside $B$.
\end{corollary}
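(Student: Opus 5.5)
The plan is to reduce Corollary~\ref{cor2} to Theorem~\ref{thm2} applied on a slightly larger ball. The standard fact needed is that, for metrics equal to $\e$ outside $B$, the scattering matrix at energy $k^2$ determines the Dirichlet-to-Neumann map on a ball. Since the Dirichlet spectrum depends on the ball, we first fix a ball $B'\Supset B$, concentric with $B$, whose radius is chosen so that $k^2$ is not a Dirichlet eigenvalue of $-\Delta_{g_0}$ in $B'$. This is possible because the eigenvalues depend continuously and strictly monotonically on the radius (domain monotonicity together with unique continuation), so only a discrete set of radii is excluded. After shrinking $\eps_0$, the same holds for every $g$ with $\|g-g_0\|_{H^s}<\eps_0$: the $H^s$ norm with $s>n/2+1$ controls $g-g_0$ in $C^1$, so the eigenvalues of $-\Delta_g$ on $B'$ stay away from $k^2$.

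\textbf{Step 1.} We show that $S_g(k)=S_{g_0}(k)$ implies $\Lambda_g(k)=\Lambda_{g_0}(k)$ on $\partial B'$.
\begin{itemize}
\item Equal scattering matrices mean equal scattering amplitudes. By Rellich's uniqueness theorem, for every incoming plane wave the outgoing scattered parts $u_g^{s}$ and $u_{g_0}^{s}$ then coincide outside $B$.
\item Consequently, the total generalized eigenfunctions $u_g(\cdot,\omega)$ and $u_{g_0}(\cdot,\omega)$ agree in $\R^n\setminus B$. In particular they have the same Cauchy data on $\partial B'$, where both metrics equal $\e$, so the normals agree.
\item Next we need density: the traces on $\partial B'$ of superpositions $\int u_g(\cdot,\omega)\phi(\omega)\,\d\omega$, $\phi\in C^\infty(\S^{n-1})$, are dense in $H^{1/2}(\partial B')$. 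This follows because $k^2$ is not a Dirichlet eigenvalue on $B'$. Indeed, if $h\in H^{-1/2}(\partial B')$ annihilates all these traces, one builds from $h$ a radiating solution whose far field vanishes; Rellich's theorem and unique continuation then force $h=0$.
\item Hence the Cauchy data sets of $-\Delta_g-k^2$ and $-\Delta_{g_0}-k^2$ on $\partial B'$ share a dense subset of Dirichlet data, with matching Neumann data. By continuity of the DN maps, $\Lambda_g(k)=\Lambda_{g_0}(k)$.
\end{itemize}

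\textbf{Step 2.} We apply Theorem~\ref{thm2} with $M=\overline{B'}$. The restriction $g|_{B'}$ is smooth and $H^s$-close to $g_0$, so the theorem gives a smooth diffeomorphism $\Phi\colon \overline{B'}\to\overline{B'}$ with $\Phi|_{\partial B'}=\Id$ and $g=\Phi^*g_0$ on $B'$.

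\textbf{Step 3.} We show $\Phi=\Id$ on $B'\setminus B$, and in fact outside $B$. On the annulus $B'\setminus \overline B$ both $g$ and $g_0$ equal $\e$, so $\Phi$ is a Euclidean isometry there, hence affine on each connected component. Since the annulus is connected and $\Phi$ is the identity on $\partial B'$, we get $\Phi=\Id$ on the annulus. We then extend $\Phi$ by the identity outside $B'$. This gives a smooth diffeomorphism of $\R^n$ equal to the identity outside $B$ (smoothness holds because $\Phi=\Id$ on a full neighborhood of $\partial B'$), with $g=\Phi^*g_0$ everywhere.

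The main obstacle is the density statement in Step~1, together with the correct identification of $S_g(k)$ with the far-field data via Rellich's theorem. For these I will rely on the standard theory in \cite{DZ-book}. The choice of $B'$ avoiding Dirichlet eigenvalues is routine, but it is necessary because $k^2$ might be a Dirichlet eigenvalue on $B$ itself.
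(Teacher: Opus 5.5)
Your Steps 1 and 2 are the paper's argument: you enlarge to a concentric ball $B'$ that avoids the Dirichlet spectrum, use Rellich's theorem together with duality and unique continuation to get $\Lambda_g(k)=\Lambda_{g_0}(k)$ on $\partial B'$, and apply Theorem~\ref{thm2} in $B'$.

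\textbf{The gap is in Step 3.} At a point $x$ of the annulus $A=B'\setminus\overline B$, the relation $g=\Phi^*g_0$ reads $g_x(v,w)=(g_0)_{\Phi(x)}\bigl(D\Phi(x)v,D\Phi(x)w\bigr)$. So $g_x=\e$ gives $D\Phi(x)\in O(n)$ only if $g_0=\e$ at the image point $\Phi(x)$. Knowing that $g_0=\e$ on $A$ does not supply this. Theorem~\ref{thm2} only tells you that $\Phi$ fixes $\partial B'$ pointwise, and nothing you have said prevents $\Phi$ from moving points of $A$ into $\supp c$, where $g_0\neq\e$. Hence ``$\Phi$ is a Euclidean isometry on $A$, hence affine'' is not yet justified.

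The conclusion is true, and there are two short repairs.

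The paper uses normal geodesics. Since $\Phi$ fixes $\partial B'$ pointwise and preserves the interior, $D\Phi$ fixes tangent vectors there. It also maps the inward $g$-unit normal to the inward $g_0$-unit normal, and both equal the Euclidean inward normal on $\partial B'$. Now $\Phi$ maps $g$-geodesics to $g_0$-geodesics, and for both metrics the inward normal geodesics are the radial segments through $A$. Uniqueness for the geodesic equation then gives $\Phi=\Id$ on $A$.

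Alternatively, you can keep your rigid-motion idea and add a continuation argument.
\begin{enumerate}
\item By uniform continuity and $\Phi|_{\partial B'}=\Id$, $\Phi$ maps a collar $\{R'-\delta<|x|<R'\}$ into the set where $g_0=\e$. On this connected collar $\Phi$ is therefore a rigid motion. It extends continuously to the identity on the sphere $\partial B'$, so it is the identity.
\item The set of $x\in A$ near which $\Phi=\Id$ is open and, by the previous step, nonempty.
\item It is also closed in $A$. At a limit point $x$ one has $\Phi(x)=x\in A$, so $\Phi$ maps a small ball around $x$ into $A$. There $\Phi$ is a rigid motion that agrees with $\Id$ on an open subset, hence equals $\Id$ on the whole ball.
\item Since $A$ is connected, this set is all of $A$.
\end{enumerate}
The rest of your Step 3, extending by the identity outside $B'$, then goes through.
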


Novikov \cite{Novikov88} and Ramm \cite{Ramm88} establish uniqueness for smooth compactly supported  potentials in dimensions $n\ge3$ from the  scattering amplitude at any fixed positive energy. The proof is based on the complex geometric optics solutions in \cite{SylvesterU87}. 
By the Liouville transform, this also yields uniqueness for conformally Euclidean metrics equal to $\e$ outside a ball. Uniqueness at fixed energy 
was proved for real exponentially decreasing  potentials in \cite{Novikov94,UhlmannVasy02} and for  electrics potentials and magnetic fields in \cite{EskinRalston95}. In Corollary~\ref{cor2}, only the background $g_0$ is assumed conformally Euclidean; the second metric $g$ is allowed to be an arbitrary sufficiently small anisotropic perturbation.

\subsection*{Acknowledgments}
The authors used ChatGPT (OpenAI) for assistance with checking computations. The authors take responsibility for the final mathematical statements and the proofs. Sebasti\'an Mu\~noz-Thon was supported by the European Research Council (ERC) under the European Union's Horizon Europe research and innovation programme (Grant agreement no. 101162990 -- ADG). Plamen Stefanov was partly supported by National Science Foundation Grant DMS-2452757.

\section{Proof of Theorem~\ref{thm2}}
\label{sec3}

\subsection*{Preliminaries}
We fix $M, g_0, k, s$ as in Theorem~\ref{thm2} and assume that $g$ is a smooth Riemannian metric satisfying \r{15} and sufficiently close to $g_0$ in $H^s(M)$.
For a symmetric matrix field $A$, write $L_Au=\partial_i(A^{ij}\partial_j u)$. For a metric $g$, write $|g|=\det(g_{ij})$. Its associated conductivity is
\begin{equation}\label{16}
A_g=|g|^{1/2}\,g^{-1},\qquad L_{A_g}=|g|^{1/2}\,\Delta_g.
\end{equation}
Fix a smooth compactly supported extension of $c$ to $\R^n$ and continue to write $g_0=e^{2c}\e$ for the extended background. Set
\begin{equation}\label{17}
a=e^{(n-2)c/2},\qquad A_0\coloneq A_{g_0}=a^2\Id.
\end{equation}

Under the DN equality \r{15}, boundary determination \cite[Theorem~8.4]{DosSantosKSU09} implies equality of the induced boundary metrics. Green's formula therefore yields the Helmholtz Alessandrini identity
\begin{equation}\label{19}
\int_M\bigl[(A_g-A_0)^{ij}(\partial_i u)\,(\partial_j v)-k^2\bigl(|g|^{1/2}-|g_0|^{1/2}\bigr)uv\bigr]\,\d x=0
\end{equation}
for $u,v\in C^\infty(M)$ satisfying $(\Delta_g+k^2)u=0$ and $(\Delta_{g_0}+k^2)v=0$ in $M$.

\subsection{The Helmholtz embedding}\label{sec3a}

The next proposition holds for general smooth metrics $g$. 
\begin{proposition} \label{prop:embedding}
There exist an integer $N$, an open neighborhood $U$ of $M$, and a smooth embedding $F\colon U\to\R^N$ satisfying
\[
(-\Delta_{g_0}-k^2)F^\alpha=0.
\]
\end{proposition}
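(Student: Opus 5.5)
Since $g_0$ is conformally Euclidean on all of $\R^n$ (after extending $c$), we work in Cartesian coordinates and construct finitely many solutions of $(\Delta_{g_0}+k^2)F^\alpha=0$ on a neighborhood $U$ of $M$ whose joint map is an injective immersion. The idea is to perturb the coordinate functions $x^1,\dots,x^n$, which give an embedding but are not solutions, into genuine solutions that are $C^1$-close to them, and to use unique continuation/Runge-type approximation to get enough of them.

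\textbf{Step 1: a slightly larger domain.} Choose a smooth bounded domain $\Omega\supset M$ with $M\subset U\Subset\Omega$, so close to $M$ that $k^2$ is still not a Dirichlet eigenvalue of $-\Delta_{g_0}$ on $\Omega$. This is possible by continuity of Dirichlet eigenvalues under domain perturbation. It also helps to take $\Omega$ so that $\R^n\setminus\overline\Omega$ is connected, for Runge approximation.

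\textbf{Step 2: local solutions with prescribed 1-jets.} For each point $p\in\overline U$ we want solutions whose differentials at $p$ span $T_p^*\R^n$. Locally, in a small ball around $p$, rescaling shows that $\Delta_{g_0}+k^2$ is a small perturbation of $e^{-2c(p)}\Delta$. Solving the Dirichlet problem on a small ball $B_r(p)$ with boundary data $x^i$ therefore gives solutions $w_i=x^i+O(r)$ in $C^1(B_{r/2}(p))$, by Schauder estimates after scaling. Runge approximation then approximates these in $C^1(\overline{B_{r/2}(p)})$ by solutions on $\Omega$; it holds since the adjoint operator has the unique continuation property and $\Omega\setminus B_r$ is connected. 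The approximants are the global solutions with $dF$ injective near $p$. By compactness, finitely many such families make the combined map an immersion on $\overline U$.

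\textbf{Step 3: injectivity.} Immersion gives local injectivity: there is $\delta>0$ with $F(x)\ne F(y)$ for $0<|x-y|<\delta$. For the compact set $\{(x,y)\in\overline U^2:|x-y|\ge\delta\}$ we need solutions separating points. For a fixed pair $x\neq y$, if every solution on $\Omega$ satisfied $u(x)=u(y)$, then by Runge density the same would hold for local solutions. This is false: take a solution near $x$ equal to about $1$ at $x$ and a different small-ball solution near $y$ equal to about $0$, and glue them by Runge approximation on the disjoint union of the two small balls, whose complement in $\Omega$ is connected. Each separating solution works on an open neighborhood of its pair, so compactness gives finitely many more components. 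Adding them keeps the immersion property, so $F$ is an injective immersion on a compact neighborhood, and hence an embedding on an open $U$.

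The main obstacle is the Runge approximation step in the $C^1$ topology, with the connectivity hypotheses. The plan is to prove it in the standard way. By Hahn--Banach, a functional annihilating all global solutions but not the local one is represented via the Green's function of $\Omega$ (here we use that $k^2$ is not an eigenvalue) by a function solving the adjoint equation outside the small sets and vanishing near $\partial\Omega$. It therefore vanishes by unique continuation, which gives a contradiction.
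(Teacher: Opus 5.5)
Your proposal is correct and follows essentially the same route as the paper. Both arguments use small-ball Helmholtz solutions obtained by rescaling (one approximately $1$ at a point, others with prescribed differential), Runge approximation upgraded to $C^1$ by interior estimates, and a compactness/Lebesgue-number argument giving an injective immersion on a compact set. The paper differs only cosmetically: it works on concentric balls $B_1\Subset B_2$ with boundary data supported in $\Gamma\subset\partial B_2$, and it obtains the differentials by density plus finite-dimensionality instead of approximating coordinate blocks.

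One correction: connectivity of $\R^n\setminus\overline\Omega$ is irrelevant, and it cannot be arranged with $\Omega$ close to $M$ when $M$ has cavities. The condition your Runge argument actually needs, and which you do use, is that $\Omega\setminus\overline D$ is connected for the small balls $D$.
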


\begin{proof}
Choose concentric balls $B_1,B_2$ with $M\subset B_1\Subset B_2$ such that $k^2$ is not a Dirichlet eigenvalue of $-\Delta_{g_0}$ in $B_2$, and set $K=\overline{B_1}$. Such an outer radius exists by strict domain monotonicity of the Dirichlet eigenvalues. Fix a nonempty open subset $\Gamma\subset\partial B_2$, and let $u_f$ solve $(-\Delta_{g_0}-k^2)u_f=0$ in $B_2$ with boundary value $f\in C_0^\infty(\Gamma)$, real-valued. We adapt \cite[Propositions~A.1 and~A.5]{LLS-Poisson20}, using the Runge approximation theorem \cite[Proposition~A.2]{LLS-Poisson20} for this operator.

First we show that solutions to the Helmholtz problem separate points. Let $x\ne y$ be points in $B_2$. Choose disjoint small balls $D_1,D_2\Subset B_2$ centered at $x,y$, with $B_2\setminus\overline{D_1\cup D_2}$ connected. Solving the Helmholtz equation in $D_1$ with boundary value $1$ produces a solution $v_1$ with $v_1(x)\ne0$ when $D_1$ is sufficiently small, as follows by rescaling. Runge approximation of $v_1$ on $D_1$ and $0$ on $D_2$, followed by Sobolev embedding, yields $u_f(x)\ne u_f(y)$.

For the differentials, fix $p\in B_2$ and $\xi\in T_p^*B_2$. On a sufficiently small ball $B(p,\eps)$, let $v_\eps$ solve the Helmholtz equation with boundary value $\xi\cdot(x-p)$. Rescaling by $\eps^{-1}v_\eps(p+\eps y)$ makes the zeroth-order term $-\eps^2k^2$, and elliptic estimates imply
\[
|\d v_\eps(p)-\xi|\le C(\eps+\eps^2k^2)|\xi|.
\]
For each fixed $\eps$, Runge approximation in $H^{n/2+2}(B(p,\eps))$, hence in $C^1$, approximates $\d v_\eps(p)$ by differentials $\d u_f(p)$. Letting $\eps\to0$ shows that the range of $f\mapsto\d u_f(p)$ is dense in $T_p^*B_2$. Being a linear subspace of a finite-dimensional space, it is closed, so every $\xi$ is attained.

Given $x\in K$, choose solutions $u_x^1,\ldots,u_x^n$ whose differentials form a basis at $x$. By the inverse function theorem, $U_x=(u_x^1,\ldots,u_x^n)$ is a diffeomorphism on a neighborhood $V_x$ of $x$. Choose a finite cover $K\subset V_{x_1}\cup\cdots\cup V_{x_m}$ and set
\[
F_0=(u_{x_1}^1,\ldots,u_{x_1}^n,\ldots,u_{x_m}^1,\ldots,u_{x_m}^n).
\]
Then $F_0$ is an immersion on $K$ and is injective on each $V_{x_j}$.

By the Lebesgue number lemma \cite[Chapter~XI, Theorem~4.5]{Dugundji66}, $F_0$ separates all pairs in $K$ with $0<|x-y|<\lambda$ for some $\lambda>0$. The remaining pairs form the compact set
\[
\{(x,y)\in K\times K:|x-y|\ge\lambda\}.
\]
For each such pair, choose a Helmholtz solution separating its points. By continuity and compactness, finitely many of these solutions $h^1,\ldots,h^q$ suffice. Thus $F=(F_0,h^1,\ldots,h^q)$ is an injective immersion on $K$. Since $K$ is compact, $F$ is a homeomorphism onto its image \cite[Chapter~XI, Theorem~2.1]{Dugundji66}, hence a smooth embedding. Restricting to $U=B_1$ proves the proposition.
\end{proof}

\subsection{The Helmholtz gauge}\label{sec3b}
The next lemma extends the gauge construction in \cite[Lemma~3.2]{MS26} to the Helmholtz equation.

\begin{lemma}\label{lemma6}
Let $M,g_0,k,s$ be as in Theorem~\ref{thm2}, and let $F$ be the embedding in Proposition~\ref{prop:embedding}. Let $g$ be a smooth metric sufficiently close to $g_0$ in $H^s(M)$. Then there exist a smooth diffeomorphism $\Phi\colon M\to M$ fixing $\partial M$ pointwise and $Z\in C^\infty(M;\R^N)$ with the following properties. Set
\begin{equation}\label{20}
\overline{g}=\Phi_*g,\qquad B=a^{-2}(A_{\overline{g}}-A_0).
\end{equation}
Then
\begin{equation}\label{21}
(\Delta_{\overline{g}}+k^2)(F+Z)=0,\qquad Z|_{\partial M}=0,\qquad
\partial_jF\cdot Z=0,\quad j=1,\ldots,n,
\end{equation}
and
\begin{equation}\label{22}
\begin{split}
\|\Phi-\Id\|_{H^{s+1}(M)}+\|B\|_{H^s(M)}+\|Z\|_{H^{s+1}(M)}&\le C\|g-g_0\|_{H^s(M)},\\
\|Z\|_{H^1(M)}&\le C\|B\|_{L^2(M)}.
\end{split}
\end{equation}
The constant $C$ and the required smallness of $\|g-g_0\|_{H^s(M)}$ depend only on $M,g_0,k,s,F$.
\end{lemma}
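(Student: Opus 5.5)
We will construct $\Phi$ and $Z$ simultaneously as the solution of a nonlinear elliptic boundary value problem near the trivial solution $(\Phi,Z)=(\Id,0)$ at $g=g_0$, and apply the implicit function theorem in Sobolev spaces. The reformulation works through the inverse map. Write $\Psi=\Phi^{-1}$. Then $(\Delta_{\overline g}+k^2)(F+Z)=0$ is equivalent to $(\Delta_g+k^2)\bigl((F+Z)\circ\Phi\bigr)=0$ on $M$, by diffeomorphism invariance of $\Delta_g$. It is convenient to look for $w=(F+Z)\circ\Phi\colon M\to\R^N$ solving
\begin{equation*}
(\Delta_g+k^2)w=0,\qquad w|_{\partial M}=F|_{\partial M},
\end{equation*}
which is uniquely solvable for $g$ close to $g_0$. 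The reason is that $k^2$ is not a Dirichlet eigenvalue for $g_0$, and eigenvalues depend continuously on $g$ in $H^s\subset C^1$ since $s>n/2+1$. Elliptic estimates give $\|w-F\|_{H^{s+1}}\le C\|g-g_0\|_{H^s}$: the difference $w-F$ solves $(\Delta_g+k^2)(w-F)=-(\Delta_g-\Delta_{g_0})F$ with zero boundary data, and the right-hand side is in $H^{s-1}$ because $F$ is smooth. The remaining task is to write $w\circ\Psi=F+Z$ with $Z\perp\partial_jF$ and $Z|_{\partial M}=0$. This is a nearest-point projection onto the embedded submanifold $F(U)$: for $y$ near $F(M)$, let $\pi(y)\in U$ be the unique point with $y-F(\pi(y))\perp \mathrm{ran}\,\d F(\pi(y))$. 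This map is smooth by the tubular neighborhood theorem applied to the embedding of Proposition~\ref{prop:embedding}.

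Next we define $\Phi=\pi\circ w\colon M\to U$ and $Z=w\circ\Phi^{-1}-F$. Since $w=F$ on $\partial M$, we have $\pi\circ w=\Id$ there, so $\Phi$ fixes $\partial M$ pointwise and $Z|_{\partial M}=0$. The orthogonality $\partial_jF\cdot Z=0$ holds by construction, because $Z(x)=w(\Phi^{-1}(x))-F(x)$ with $x=\pi(w(\Phi^{-1}x))$. The equation in \eqref{21} holds by invariance. Moreover $\|\Phi-\Id\|_{H^{s+1}}\le C\|w-F\|_{H^{s+1}}$: composition with the smooth map $\pi$ is locally Lipschitz on $H^{s+1}$, since $s+1>n/2$, and $\pi\circ F=\Id$. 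Because $\Phi$ is $H^{s+1}$-close to the identity, hence $C^1$-close, and equals the identity on $\partial M$, it is a diffeomorphism of $M$ onto itself. To see this, note that it is a local diffeomorphism that is the identity on the boundary, so a degree argument gives that it is a bijection. Smoothness of $\Phi$ follows from smoothness of $g$ and elliptic regularity for $w$. The bounds on $Z$ in $H^{s+1}$ and on $B$ in $H^s$ then follow from Sobolev algebra and composition estimates. We use $\overline g-g_0=\Phi_*g-g_0$, which loses one derivative through $\d\Phi^{-1}\in H^s$, and this is acceptable since $H^s$ is an algebra.

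The main obstacle is the second estimate $\|Z\|_{H^1}\le C\|B\|_{L^2}$, which is a low-regularity bound in terms of the gauged perturbation rather than $g-g_0$. We will derive it from the equation $(\Delta_{\overline g}+k^2)(F+Z)=0$ rewritten in divergence form. Multiplying by $|\overline g|^{1/2}$ gives $L_{A_{\overline g}}Z+k^2|\overline g|^{1/2}Z=-L_{A_{\overline g}}F-k^2|\overline g|^{1/2}F$. Subtracting $0=L_{A_0}F+k^2|g_0|^{1/2}F$ shows that the right-hand side equals
\begin{equation*}
-\partial_i\bigl(a^2B^{ij}\partial_jF\bigr)-k^2\bigl(|\overline g|^{1/2}-|g_0|^{1/2}\bigr)F.
\end{equation*}
In dimension $n\ge3$ one has $|A_g|=|g|^{(n-2)/2}$, so $|\overline g|^{1/2}-|g_0|^{1/2}$ is pointwise controlled by $|B|$, using $\|B\|_{L^\infty}$ small. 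The first term lies in $H^{-1}$ with norm $\le C\|B\|_{L^2}$. Since $Z\in H^1_0$ and the operator $L_{A_{\overline g}}+k^2|\overline g|^{1/2}$ is a small $L^\infty$-coefficient perturbation of the invertible map $L_{A_0}+k^2|g_0|^{1/2}\colon H^1_0\to H^{-1}$, it is uniformly invertible. The non-eigenvalue assumption enters here, via a Fredholm and perturbation argument. This yields the $H^1$ bound, with constants depending only on $M,g_0,k,s,F$.
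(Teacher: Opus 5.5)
Your proposal is correct and is essentially the paper's proof: your $w$ is the paper's $F_g$, your $\Phi=\pi\circ w$ is the paper's $F^{-1}\circ\pi\circ F_g$, and the $H^1$ bound comes from the same divergence-form equation, using $|\overline g|^{1/2}=|g_0|^{1/2}\det(\Id+B)^{1/(n-2)}$ and uniform $H_0^1\to H^{-1}$ invertibility. The implicit-function-theorem framing in your first sentence is never used and can be dropped, since the construction is explicit.
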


\begin{proof}
Since $k^2$ is not a Dirichlet eigenvalue of $-\Delta_{g_0}$, it is not a Dirichlet eigenvalue of $-\Delta_g$ for $g$ sufficiently close to $g_0$. We can therefore solve
\[
(\Delta_g+k^2)F_g^\alpha=0\quad\text{in }M,\qquad F_g^\alpha|_{\partial M}=F^\alpha|_{\partial M},\qquad \alpha=1,\ldots,N.
\]
By elliptic regularity up to the boundary, $F_g\in C^\infty(M;\R^N)$. Since $F$ solves the background equation, we have
\[
(\Delta_{g_0}+k^2)(F_g-F)=(\Delta_{g_0}-\Delta_g)F_g,\qquad (F_g-F)|_{\partial M}=0.
\]
By standard elliptic estimates and Sobolev product estimates,
\[
\begin{split}
\|F_g-F\|_{H^{s+1}}&\le C\|(\Delta_{g_0}-\Delta_g)F_g\|_{H^{s-1}}\\
&\le C\|g-g_0\|_{H^s}\bigl(1+\|F_g-F\|_{H^{s+1}}\bigr),
\end{split}
\]
where we used that $F$ is fixed and smooth. 
All norms here and below are over $M$. For $g$ sufficiently close to $g_0$, we can absorb the last term to get
\begin{equation}\label{23}
\|F_g-F\|_{H^{s+1}}\le C\|g-g_0\|_{H^s}.
\end{equation}

Let $\pi$ be the Euclidean nearest-point projection onto $F(U)$, defined and smooth near $F(M)$. By the preceding estimate and Sobolev embedding, $F_g(M)$ lies in its domain for $g$ sufficiently close to $g_0$. Set
\begin{equation}\label{24}
\Phi=F^{-1}\circ\pi\circ F_g.
\end{equation}
As in the proof of \cite[Lemma~3.2]{MS26}, $\Phi$ is a smooth diffeomorphism of $M$ onto itself fixing $\partial M$ pointwise. Set
\begin{equation}\label{26}
Z=F_g\circ\Phi^{-1}-F.
\end{equation}
The Sobolev estimates in that proof also establish the first estimate of the lemma. By the definition of $\pi$, $Z$ is normal to $F(U)$, and $Z=0$ on $\partial M$ because $F_g=F$ there. Invariance of the Helmholtz equation under diffeomorphisms implies $(\Delta_{\overline{g}}+k^2)(F+Z)=0$.

It remains to estimate $Z$ in $H^1$. Since $(\Delta_{g_0}+k^2)F=0$, we have
\begin{equation}\label{27}
\bigl(L_{A_{\overline{g}}}+k^2|\overline{g}|^{1/2}\bigr)Z =-\diver(a^2B\nabla F)-k^2\bigl(|\overline{g}|^{1/2}-|g_0|^{1/2}\bigr)F,
\qquad Z|_{\partial M}=0.
\end{equation}
Since $A_{\overline{g}}=a^2(\Id+B)$ and $n\ge3$,
\[
|\overline{g}|^{1/2}=|g_0|^{1/2}\,\det(\Id+B)^{1/(n-2)}.
\]
For $B$ small in $L^\infty$, this implies
\[
\bigl\||\overline{g}|^{1/2}-|g_0|^{1/2}\bigr\|_{L^2}\le C\|B\|_{L^2}.
\]
By the assumption on $k^2$, the operator $L_{A_0}+k^2|g_0|^{1/2}$ is invertible from $H_0^1(M)$ to $H^{-1}(M)$. The $H^s$ bound for $B$ and Sobolev embedding show that the corresponding operator for $\overline{g}$ is close to it in the operator norm from $H_0^1(M)$ to $H^{-1}(M)$. Its inverse is therefore uniformly bounded. Applying it to the equation for $Z$, we get
\[
\|Z\|_{H^1} \le C\left(\|a^2B\nabla F\|_{L^2}+k^2\bigl\|(|\overline{g}|^{1/2}-|g_0|^{1/2})F\bigr\|_{L^2}\right)\le C\|B\|_{L^2}.
\]
\end{proof}

\subsection{Boundary jets}\label{sec3c}

\begin{lemma}\label{lemma7}
Let $g$ and $\overline{g}$ be as in Lemma~\ref{lemma6}, and assume that $\Lambda_g(k)=\Lambda_{g_0}(k)$. Then $\overline{g}-g_0$ is flat at $\partial M$.
\end{lemma}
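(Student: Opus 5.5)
The plan is to combine boundary determination for the DN map with the normalization built into the Helmholtz gauge of Lemma~\ref{lemma6}. Since $\Phi$ fixes $\partial M$ pointwise, $\Lambda_{\overline g}(k)=\Lambda_g(k)=\Lambda_{g_0}(k)$. This is invariance of the DN map under boundary-fixing diffeomorphisms; the normal derivative is taken with respect to the pushed-forward metric.

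\textbf{Step 1: determine the tangential jets.} By the boundary determination result \cite[Theorem~8.4]{DosSantosKSU09}, used already for \r{19}, equality of DN maps implies that in boundary normal coordinates the full Taylor series of $\overline g$ and $g_0$ at $\partial M$ coincide. The zeroth-order term of $-\Delta_g-k^2$ does not affect the symbol expansion of the DN map that determines these jets. Hence there is a diffeomorphism $\Psi$, defined near $\partial M$ and equal to the identity on $\partial M$, such that $\Psi^*\overline g-g_0$ is flat at $\partial M$. Concretely, $\Psi$ is the composition of the boundary normal coordinate maps of $\overline g$ and $g_0$.

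\textbf{Step 2: show that $\Psi$ is flat at $\partial M$.} By Step 1 it suffices to show that $\Psi-\Id$ is flat at $\partial M$. I would use the gauge conditions \r{21}. The map $\Psi$ converts $\overline g$-harmonic-type data into $g_0$-data modulo flat errors. Set $\widetilde Z=(F+Z)\circ\Psi-F$. Then
\[
(\Delta_{\Psi^*\overline g}+k^2)(F+\widetilde Z)=0,\qquad (\Delta_{g_0}+k^2)F=0,
\]
so $(\Delta_{g_0}+k^2)\widetilde Z$ is flat at $\partial M$, and $\widetilde Z|_{\partial M}=0$.

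The key point is that the Cauchy data of $F+Z$ for $\overline g$ agree with those of $F$ for $g_0$. Indeed, $F_g$ has boundary data $F|_{\partial M}$, and equality of the DN maps gives $\partial_{\nu_{\overline g}}(F+Z)=\partial_{\nu_{g_0}}F$ on $\partial M$. The normals agree to all orders after applying $\Psi$. Thus $\widetilde Z$ has zero Cauchy data and solves an equation with flat right-hand side. Differentiating the equation repeatedly in the normal direction then shows that $\widetilde Z$ is flat at $\partial M$.

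Now write $\Psi=\Id+\eta$ and expand
\[
(F+Z)\circ\Psi-F = Z+\nabla F\cdot\eta+O(\eta^2)+(\text{terms involving }Z\circ\Psi-Z).
\]
Project onto the tangent and normal spaces of $F(U)$. By \r{21}, $Z$ is normal to $F(U)$ at the point $F(x)$, while $\nabla F\cdot\eta$ is tangent. I would argue by induction on the order $m$ in the normal variable $x^n$. Suppose $\eta=O((x^n)^m)$ and $Z=O((x^n)^m)$. The tangential projection of the order-$m$ coefficient of $\widetilde Z$ is $\nabla F\cdot\eta_m$, which vanishes because $\widetilde Z$ is flat. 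Since $F$ is an immersion, $\eta_m=0$. The normal projection then gives $Z_m=0$. The base case $m=1$ holds because $\eta=0$ and $Z=0$ on $\partial M$.

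I expect this induction to be the main obstacle. The difficulty is controlling the nonlinear cross terms, such as $Z\circ\Psi-Z$ and the curvature of $F(U)$ in the normal projection at $F(x+\eta)$ versus $F(x)$. One must check that at each order these terms contribute only lower-order coefficients, which are already known to vanish.

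\textbf{Step 3: conclude.} From $\eta$ flat and $\Psi^*\overline g-g_0$ flat, it follows that $\overline g-g_0$ is flat at $\partial M$, which is the claim of Lemma~\ref{lemma7}.
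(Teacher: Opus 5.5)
Your Step 1 has a gap. The result you cite, \cite[Theorem~8.4]{DosSantosKSU09}, treats the metric and the potential as joint unknowns. The DN map of $-\Delta_g+q$ is invariant under the conformal gauge
\[
(g,q)\mapsto\Bigl(f^{4/(n-2)}g,\ f^{-4/(n-2)}\bigl(q-\tfrac{\Delta_g f}{f}\bigr)\Bigr),\qquad f|_{\partial M}=1,\quad \d f|_{\partial M}=0.
\]
So that theorem only identifies the jets of $\psi^*g$ with those of $f^{4/(n-2)}g_0$ for some unknown $f$. It does not give the full jet equality in boundary normal coordinates that you state.

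Your justification, that the zeroth-order term does not affect the relevant symbol expansion, is also incorrect. The potential enters the symbol of the DN map at order $-1$, which is exactly where $\partial_\nu^2 g$ first appears. This coupling is the reason the conformal gauge exists.

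What makes your Step 1 true is that the potential $-k^2$ is known and the same for both metrics. Composing the two gauges gives $\psi$ fixing $\partial M$ and $f$ with $f=1$, $\d f=0$ on $\partial M$, such that
\[
\psi^*g\equiv f^{4/(n-2)}g_0,\qquad -k^2\equiv f^{-4/(n-2)}\Bigl(-k^2-\frac{\Delta_{g_0}f}{f}\Bigr).
\]
Equivalently, $\Delta_{g_0}f\equiv k^2\bigl(f^{(n+2)/(n-2)}-f\bigr)$ as boundary jets. Solving this relation successively for the normal derivatives, starting from $f=1$, $\d f=0$, shows that $f-1$ is flat. Only then is $\psi^*g-g_0$ flat. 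Alternatively, you could redo the symbol recursion in boundary normal coordinates with the potential known, but you must say explicitly that the known $q$ is what separates its contribution from that of $\partial_\nu^2 g$. This is exactly where the Helmholtz term has to be handled in this lemma, so it cannot be waved away.

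Once Step 1 is repaired, the rest of your argument is the paper's. With $\psi=\Phi^{-1}\circ\Psi$, your $\widetilde Z=F_g\circ\psi-F$ is the paper's $W$. It is flat because it has zero Cauchy data and satisfies an elliptic equation with flat right-hand side. Your tangent/normal induction is the projection argument the paper imports from \cite[Lemma~3.3]{MS26}.

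The cross terms you worry about are harmless. For $m\ge2$ they are $O((x^n)^{2m-1})$, which is of order at least $m+1$. For $m=1$, the term $Z\circ\Psi-Z$ contributes $\eta_1^nZ_1$ at first order. That term is normal to $F(U)$, so it drops out of the tangential projection, and it vanishes once the tangential part gives $\eta_1=0$.
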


\begin{proof}
We work near an arbitrary boundary point, and write $\equiv$ for equality of boundary jets. We will use the boundary jet recovery  Theorem~8.4 in \cite{DosSantosKSU09} for general elliptic second order operators which allows a conformal gauge because it treats both the metric and the potential as unknowns; while in our case, the potential is fixed to be $-k^2$. Applying it to the pairs $(g,-k^2)$ and $(g_0,-k^2)$, and composing the conformal gauges by multiplying their factors, we obtain a local diffeomorphism $\psi$ fixing the boundary pointwise and a smooth $f>0$, with $f=1$ and $\d f=0$ on the boundary, such that, with $\widehat g=\psi^*g$,
\[
\widehat g\equiv f^{4/(n-2)}g_0,\qquad
-k^2\equiv f^{-4/(n-2)}\left(-k^2-\frac{\Delta_{g_0}f}{f}\right).
\]
Then
\[
\Delta_{g_0}f\equiv k^2\left(f^{(n+2)/(n-2)}-f\right).
\]
This is an elliptic semilinear PDE (which we are not trying to solve). 
Successive normal differentiation, using $f=1$ and $\d f=0$ on the boundary, shows that $f-1$ is flat, hence so is $\widehat g-g_0$.

Set $W=F_g\circ\psi-F$. Then
\begin{equation}\label{28}
\begin{gathered}
(\Delta_{\widehat g}+k^2)W=(\Delta_{g_0}-\Delta_{\widehat g})F,\\
W|_{\partial M}=\partial_{\nu_{\widehat g}}W|_{\partial M}=0.
\end{gathered}
\end{equation}
The Neumann condition follows from the DN equality and $\widehat g=g_0$ on the boundary. Since the right-hand side is flat, normal differentiation shows that $W$ is flat as well. The projection argument in \cite[Lemma~3.3]{MS26} now shows that $\Phi\circ\psi-\Id$ is flat, too. Since $(\Phi\circ\psi)_*\widehat g=\overline{g}$, the lemma follows.
\end{proof}

Lemma~\ref{lemma7} allows us to extend $\overline{g}$ by $g_0$ outside $M$, and $B$ by zero; then $B\in C_0^\infty(\R^n)$. By \r{22},
\begin{equation}\label{29}
\|B\|_{H^s(\R^n)}\le C\|g-g_0\|_{H^s(M)}.
\end{equation}

The estimate in \cite[Lemma~3.4]{MS26} also holds for the Helmholtz gauge.

\begin{lemma}\label{lemma8}
For $B$ as above,
\begin{equation}\label{30}
\|\diver B\|_{L^2(\R^n)}\le C\|B\|_{L^2(\R^n)},
\end{equation}
where $C$ depends only on $M,g_0,k,s,F$.
\end{lemma}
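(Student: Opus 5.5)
We derive the estimate from the gauge condition \r{21}, expanded to first order in $B$. Write $A_{\overline g}=a^2(\Id+B)$ and let $\rho=|\overline g|^{1/2}$, $\rho_0=|g_0|^{1/2}$. Multiplying $(\Delta_{\overline g}+k^2)(F+Z)=0$ by $\rho$ gives
\[
\diver\bigl(a^2(\Id+B)\nabla(F+Z)\bigr)+k^2\rho(F+Z)=0 .
\]
Subtracting the background equation $\diver(a^2\nabla F)+k^2\rho_0F=0$ yields
\[
\diver(a^2B\nabla F)=-\diver\bigl(a^2(\Id+B)\nabla Z\bigr)-k^2\rho Z-k^2(\rho-\rho_0)F,
\]
which is \r{27} rewritten. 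The left side equals $a^2(\partial_iB^{ij})\partial_jF+\partial_i(a^2)B^{ij}\partial_jF+a^2B^{ij}\partial_{ij}F$. All terms except the first are bounded in $L^2$ by $C\|B\|_{L^2}$, and the last term on the right is bounded the same way by the determinant estimate in the proof of Lemma~\ref{lemma6}. The key quantity is therefore the vector field $(\diver B)^j$ paired with $\nabla F^\alpha$. Since $F$ is an embedding, the $n\times N$ matrix $(\partial_jF^\alpha)$ has a smooth left inverse on $M$. Hence $|\diver B|\le C\sum_\alpha|(\diver B)\cdot\nabla F^\alpha|$ pointwise, so it suffices to control each $(\diver B)\cdot\nabla F^\alpha$ in $L^2$. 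Outside $M$, $B=0$, so only $M$ matters.

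The obstacle is the term $\diver(a^2(\Id+B)\nabla Z)$. It contains second derivatives of $Z$, which \r{22} controls only in $H^1$ by $\|B\|_{L^2}$. We use the orthogonality $\partial_jF\cdot Z=0$ to remove these second derivatives after contracting with $\partial_jF$. Differentiating gives $\partial_jF\cdot\partial_iZ=-\partial_{ij}F\cdot Z$. Differentiating once more gives
\[
\partial_jF\cdot\partial_{il}Z=-\partial_{ijl}F\cdot Z-\partial_{ij}F\cdot\partial_lZ-\partial_{jl}F\cdot\partial_iZ,
\]
so $\partial_jF\cdot\partial^2Z$ is pointwise bounded by $C(|Z|+|\nabla Z|)$. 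We contract the vector identity above in $\alpha$ with $\partial_mF^\alpha$, which is the Euclidean inner product in $\R^N$. The principal part $a^2\partial_mF\cdot\Delta Z$ and the part $a^2B^{il}\partial_mF\cdot\partial_{il}Z$ are then both bounded by $C(|Z|+|\nabla Z|)$, using $\|B\|_{L^\infty}\le C$ from Sobolev embedding and $s>n/2$. The remaining first-order terms $\partial_i(a^2(\delta+B)^{il})\partial_lZ$ need $\nabla B\in L^\infty$, which follows from $s>n/2+1$. The zeroth-order term $k^2\rho Z$ is harmless.

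Collecting terms, the contraction gives pointwise
\[
\Bigl|a^2\sum_{i,j}(\partial_iB^{ij})\,G_{jm}\Bigr|\le C\bigl(|B|+|\nabla B|\,|B|+|Z|+|\nabla Z|\bigr),\qquad G_{jm}=\partial_jF\cdot\partial_mF .
\]
The Gram matrix $G$ is invertible because $F$ is an immersion. The product $|\nabla B||B|$ comes from $\diver(a^2B\nabla F)$ only in the form $(\partial B)\partial F$, so it is in fact linear. A quadratic contribution $B\,\partial B$ enters only through $\partial_i B^{il}\partial_l Z$, and there we bound $\|\nabla B\|_{L^\infty}\|\nabla Z\|_{L^2}$. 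Inverting $G$ and taking $L^2$ norms gives $\|\diver B\|_{L^2}\le C(\|B\|_{L^2}+\|Z\|_{H^1})$. The second estimate in \r{22} then yields \r{30}. The constants depend only on $\|g-g_0\|_{H^s}$ being small and on $M,g_0,k,s,F$, as claimed.
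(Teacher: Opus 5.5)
Your proof is correct and follows the paper's argument. You contract the equation for $Z$ with $\partial_\ell F$, use $\partial_\ell F\cdot Z=0$ and its derivatives to remove the second derivatives of $Z$ and the $k^2|\overline g|^{1/2}Z$ term, invert a Gram-type matrix, and close with $\|Z\|_{H^1}\le C\|B\|_{L^2}$ and the volume-density bound. The one difference is minor: the paper keeps the cross term $\partial_i(a^2B^{ij})\,Z\cdot\partial_j\partial_\ell F$ on the left, so its coefficient matrix is $\partial_\ell F\cdot\partial_jF-Z\cdot\partial_j\partial_\ell F$, which is invertible because $Z$ is small in $L^\infty$. You instead bound the corresponding $\partial B\,\partial Z$ term by $\|\nabla B\|_{L^\infty}|\nabla Z|$ using $s>n/2+1$, which also works; the $|\nabla B|\,|B|$ term in your displayed pointwise bound never actually arises.
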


\begin{proof}
By Lemma~\ref{lemma6},
\begin{equation}\label{31}
L_{A_{\overline{g}}}Z
=-\partial_i(a^2B^{ij}\partial_jF)-k^2|\overline{g}|^{1/2}\,Z 
-k^2\bigl(|\overline{g}|^{1/2}-|g_0|^{1/2}\bigr)F.
\end{equation}
Take scalar products with $\partial_\ell F$. The term $-k^2|\overline{g}|^{1/2}Z$ makes no contribution, since $Z\cdot\partial_\ell F=0$. The tangential calculation in \cite[Lemma~3.4]{MS26} then gives a system for $\partial_i(a^2B^{ij})$ with the same coefficient matrix $\partial_\ell F\cdot\partial_jF-Z\cdot\partial_j\partial_\ell F$. 
The only additional term on the right is
\[
-k^2\bigl(|\overline{g}|^{1/2}-|g_0|^{1/2}\bigr)\,\partial_\ell F\cdot F.
\]
Since $F$ is fixed and smooth, the estimate for the volume-density difference in the proof of Lemma~\ref{lemma6} bounds its $L^2(M)$ norm by $C\|B\|_{L^2(M)}$. Since $F$ is an embedding and $Z$ is small in $L^\infty$, the matrix is uniformly invertible, as in \cite[Lemma~3.4]{MS26}. Therefore,
\[
\begin{split}
\|\diver(a^2B)\|_{L^2}&\le C\left(\|B\|_{L^2}+\|Z\|_{H^1}+k^2\bigl\||\overline{g}|^{1/2}-|g_0|^{1/2}\bigr\|_{L^2}\right) \le C\|B\|_{L^2},
\end{split}
\]
where we used Lemma~\ref{lemma6} again, and all norms are in $M$. Finally, $\diver B=a^{-2}\diver(a^2B)-2B\nabla\log a$. Since $B$ extends smoothly by zero by Lemma~\ref{lemma7}, the estimate holds on $\R^n$ as well.
\end{proof}

\subsection{The Liouville transform}\label{sec3d}
Extend $\overline{g}$ by the fixed extended background $g_0$ and $B$ by zero outside $M$, as in Lemma~\ref{lemma7}. Then $A_{\overline{g}}=a^2(\Id+B)$ on $\R^n$. Set
\begin{equation}\label{34}
b_2=\|B\|_{L^2},\qquad b_s=\|B\|_{H^s},\qquad b_\infty=\|B\|_{L^\infty}.
\end{equation}
All norms below are over $\R^n$ unless another domain is indicated. Sobolev embedding and interpolation imply
\begin{equation}\label{35}
b_\infty+\|\diver B\|_{L^\infty}\le Cb_s
\end{equation}
and
\begin{equation}\label{36}
b_\infty\le C\|B\|_{H^{s-1}}\le Cb_2^{1/s}b_s^{1-1/s}.
\end{equation}
As in \cite[Proposition~2.5]{MS26}, the continuation argument will require $\tau b_\infty$ to be small; the interpolation estimate will allow us to choose the frequency cutoff within this range.

The substitution $u=v/a$ in $(\Delta_{\overline{g}}+k^2)u=0$ leads to the following version of \cite[Lemma~3.5]{MS26}. Set
\begin{equation}\label{18}
q=\frac{\Delta a}{a}-k^2e^{2c},\qquad q+k^2\in C_0^\infty(\R^n),
\end{equation}
where $\Delta$ is the Euclidean Laplacian.

\begin{lemma}\label{lemma9}
Set
\begin{equation}\label{37}
q_B=a^{-1}\diver(B\nabla a)-k^2e^{2c}\left(\det(\Id+B)^{1/(n-2)}-1\right).
\end{equation}
Then, for $v\in C^\infty(\R^n)$,
\begin{equation}\label{38}
a^{-1}\bigl(L_{A_{\overline{g}}}+k^2|\overline{g}|^{1/2}\bigr)(v/a)
=(\Delta-q)v+\diver(B\nabla v)-q_Bv.
\end{equation}
Furthermore,
\begin{equation}\label{39}
\|\diver B\|_{L^2}+\|q_B\|_{L^2}\le Cb_2,\qquad
\|\diver B\|_{L^\infty}+\|q_B\|_{L^\infty}\le Cb_s.
\end{equation}
The constant $C$ depends only on $M,g_0,k,s,F$ and the fixed extension of $c$.
\end{lemma}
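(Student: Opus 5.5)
The identity \r{38} is a direct computation, and the estimates \r{39} then follow from \r{35}, \r{36} and Lemma~\ref{lemma8}. Recall $A_{\overline g}=a^2(\Id+B)$ on $\R^n$. From the displayed formula in the proof of Lemma~\ref{lemma6},
\[
|\overline g|^{1/2}=|g_0|^{1/2}\det(\Id+B)^{1/(n-2)},
\]
and $|g_0|^{1/2}=e^{nc}=a^2e^{2c}$, since $a^2=e^{(n-2)c}$. This formula extends to $\R^n$ because $B=0$ outside $M$. Hence
\[
k^2a^{-1}|\overline g|^{1/2}(v/a)=k^2e^{2c}v+k^2e^{2c}\bigl(\det(\Id+B)^{1/(n-2)}-1\bigr)v .
\]
The first term combines with $\Delta a/a$ to produce $-q$, and the second term is exactly the $k^2$ part of $-q_Bv$.

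For the principal part, I split
\[
a^{-1}L_{a^2(\Id+B)}(v/a)=a^{-1}\diver(a^2\nabla(v/a))+a^{-1}\diver(a^2B\nabla(v/a)).
\]
The first piece is the classical Liouville identity $a^{-1}\diver(a^2\nabla(v/a))=\Delta v-(\Delta a/a)v$. To check it, write $a^2\nabla(v/a)=a\nabla v-v\nabla a$ and take the divergence; the terms $\nabla a\cdot\nabla v$ cancel.

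In the second piece, $a^2B\nabla(v/a)=aB\nabla v-vB\nabla a$. Expanding,
\[
\diver(aB\nabla v)=a\diver(B\nabla v)+\nabla a\cdot B\nabla v,\qquad
\diver(vB\nabla a)=v\diver(B\nabla a)+\nabla v\cdot B\nabla a .
\]
Since $B$ is symmetric, the two cross terms cancel. After dividing by $a$ we get $\diver(B\nabla v)-a^{-1}\diver(B\nabla a)v$. Summing all pieces gives \r{38}.

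For \r{39}, the term $\diver B$ is handled by Lemma~\ref{lemma8} in $L^2$ and by \r{35} in $L^\infty$. I expand
\[
a^{-1}\diver(B\nabla a)=(\diver B)\cdot\nabla\log a+\tr(B\,\nabla^2a)/a ,
\]
where $a$ and its derivatives are smooth and bounded on the support of $B$, with $a$ bounded below. This term is therefore bounded by $C(|\diver B|+|B|)$ pointwise, which is $\le Cb_2$ in $L^2$ by Lemma~\ref{lemma8}, and $\le Cb_s$ in $L^\infty$ by \r{35}.

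For the determinant term, $b_\infty\le Cb_s$ is small, so $\det(\Id+B)^{1/(n-2)}$ is a smooth function of $B$ near $0$ that vanishes at $B=0$. Its difference from $1$ is therefore pointwise $\le C|B|$, as in the proof of Lemma~\ref{lemma6}. This gives the $L^2$ bound $Cb_2$ and the $L^\infty$ bound $Cb_\infty\le Cb_s$.

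The only subtle point is that $\diver B$ in $L^2$ is not controlled by $\|B\|_{L^2}$ in general. This is exactly where the gauge estimate of Lemma~\ref{lemma8} is needed, and where the structure of $q_B$ (first derivatives of $B$ only through $\diver B$) matters. The rest is bookkeeping. The constants depend only on $a$, i.e.\ on $c$ and its fixed extension, and on the constant of Lemma~\ref{lemma8}.
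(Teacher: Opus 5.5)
Your proof is correct and follows the paper's argument. You verify the Liouville identity directly, where the paper cites [MS26, Lemma~3.5]. You handle the Helmholtz term through $a^{-2}|\overline{g}|^{1/2}=e^{2c}\det(\Id+B)^{1/(n-2)}$, and you bound $q_B$ using the same splitting $\nabla\log a\cdot\diver B+a^{-1}B^{ij}\partial_i\partial_j a$ together with Lemma~\ref{lemma8}, \r{35}, and the pointwise bound on the determinant term. The interpolation estimate \r{36} that you cite at the outset is never actually used.
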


\begin{proof}
By the calculation in \cite[Lemma~3.5]{MS26},
\begin{equation}\label{40}
a^{-1}L_{A_0}(v/a)=\Delta v-\frac{\Delta a}{a}v,
\end{equation}
and the conductivity perturbation contributes $\diver(B\nabla v)-a^{-1}\diver(B\nabla a)v$. The additional Helmholtz term is determined by
\[
a^{-2}|\overline{g}|^{1/2}=e^{2c}\det(\Id+B)^{1/(n-2)},
\]
which proves the identity in the lemma.

For $B$ small in $L^\infty$,
\[
\bigl\|\det(\Id+B)^{1/(n-2)}-1\bigr\|_{L^p}\le C\|B\|_{L^p},\qquad p=2,\infty.
\]
The differential term in $q_B$ is $\nabla\log a\cdot\diver B+a^{-1}B^{ij}\partial_i\partial_j a$. The estimates therefore follow from Lemma~\ref{lemma8} and Sobolev embedding.
\end{proof}

The potential $q$ equals $-k^2$ outside a compact set. To construct the background complex geometric optics solutions, we will cut it off outside a neighborhood of the region under consideration.

\subsection{Background complex geometric optics solutions}\label{sec3e}

\begin{lemma}\label{lemma10}
Fix a closed ball $\B$ containing $M$ in its interior and an integer $\ell\ge0$. There are constants $\tau_*$, $C>0$ such that, if $\zeta\in\C^n$, $\zeta\cdot\zeta=0$, and $|\zeta|\ge \tau_*$, then there is a smooth function on $\R^n$ of the form
\begin{equation}\label{42}
u_\zeta^0=a^{-1}e^{x\cdot\zeta}(1+r_\zeta^0)
\end{equation}
satisfying
\[
(\Delta_{g_0}+k^2)u_\zeta^0=0\quad\text{in a neighborhood of }\B
\]
and
\begin{equation}\label{43}
\|r_\zeta^0\|_{C^\ell(\B)}\le\frac{C}{|\zeta|}.
\end{equation}
The constants depend only on the fixed extended background $g_0$, $k$, $\B$, $\ell$, and the fixed cutoff in the proof.
\end{lemma}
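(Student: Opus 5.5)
The plan is to reduce, by the Liouville transform, to a Schr\"odinger equation with a compactly supported potential, and then to use the standard Sylvester--Uhlmann construction of complex geometric optics solutions. By \r{40} with $B=0$ and Lemma~\ref{lemma9}, $u=v/a$ solves $(\Delta_{g_0}+k^2)u=0$ exactly when $a^{-1}\bigl(L_{A_0}+k^2|g_0|^{1/2}\bigr)(v/a)=(\Delta-q)v=0$. Here $q=\Delta a/a-k^2e^{2c}$, and $q+k^2\in C_0^\infty(\R^n)$ by \r{18}.

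Since $q\to -k^2$ at infinity, $q$ is not compactly supported. I would therefore fix a cutoff $\chi\in C_0^\infty(\R^n)$ with $\chi=1$ on a neighborhood $\B'$ of $\B$, and set $\tilde q=\chi q\in C_0^\infty(\R^n)$. It then suffices to solve $(\Delta-\tilde q)v=0$ on all of $\R^n$. This $v$ solves $(\Delta-q)v=0$ on $\B'$, so $u_\zeta^0=v/a$ solves the background equation in a neighborhood of $\B$.

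Next I would seek $v=e^{x\cdot\zeta}(1+r)$. Because $\zeta\cdot\zeta=0$, the equation for $r$ becomes
\[
(\Delta+2\zeta\cdot\nabla)r=\tilde q(1+r).
\]
I would solve it with the Sylvester--Uhlmann inverse $G_\zeta$ of $\Delta+2\zeta\cdot\nabla$ on weighted spaces, using the bound $\|G_\zeta f\|_{L^2_\delta}\le C|\zeta|^{-1}\|f\|_{L^2_{\delta+1}}$ for $-1<\delta<0$. Its Sobolev analogue $\|G_\zeta f\|_{H^m_\delta}\le C|\zeta|^{-1}\|f\|_{H^m_{\delta+1}}$ holds because $G_\zeta$ commutes with derivatives. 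Multiplication by the compactly supported smooth $\tilde q$ maps $H^m_\delta$ to $H^m_{\delta+1}$ boundedly. Hence, for $|\zeta|\ge\tau_*$ large, the map $r\mapsto G_\zeta(\tilde q(1+r))$ is a contraction on $H^m_\delta$ with norm $O(|\zeta|^{-1})$. Its fixed point satisfies $\|r\|_{H^m_\delta}\le C|\zeta|^{-1}\|\tilde q\|_{H^m_{\delta+1}}$. I would take $m>n/2+\ell$; Sobolev embedding on $\B$ then gives \r{43}.

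Smoothness of $r$ follows by elliptic regularity, since $(\Delta+2\zeta\cdot\nabla)r=\tilde q(1+r)$ has smooth compactly supported data in every $H^m$. The constants depend only on $\tilde q$, which is determined by $g_0$, $k$, $\B$ and $\chi$, together with $\ell$. This matches the dependence stated in the lemma.

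The main point requiring care is the derivative version of the weighted estimate in $C^\ell$. I would handle it by working in $H^m_\delta$, where derivatives commute with the constant-coefficient operator $G_\zeta$. The multiplier $\tilde q$ then only needs the Leibniz rule, with all its derivatives compactly supported, so no loss in $|\zeta|$ occurs.
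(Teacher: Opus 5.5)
Your proposal is correct and follows the paper's proof. Both arguments use the Liouville reduction to $(\Delta-q)v=0$, replace $q$ by the cutoff $\tilde q=\chi q$ with $\chi=1$ near $\B$, and solve $(\Delta+2\zeta\cdot\nabla)r=\tilde q(1+r)$ by the Sylvester--Uhlmann construction. The only difference is that you derive the $C^\ell$ remainder bound yourself, via the weighted spaces $H^m_\delta$ with $m>n/2+\ell$ followed by Sobolev embedding, whereas the paper cites it from \cite[Lemma~3.6]{MS26}.
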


\begin{proof}
By the Liouville identity \r{38} with $B=0$ and $\zeta\cdot\zeta=0$, the ansatz \r{42} satisfies
\[
a^{-1}|g_0|^{1/2}e^{-x\cdot\zeta}(\Delta_{g_0}+k^2)u_\zeta^0=P_\zeta r_\zeta^0-q(1+r_\zeta^0),\qquad P_\zeta=\Delta+2\zeta\cdot\nabla.
\]
Choose $\chi\in C_0^\infty(\R^n)$ equal to $1$ near $\B$, and set $\tilde q=\chi q$. It suffices to solve
\[
P_\zeta r_\zeta^0=\tilde q(1+r_\zeta^0)\quad\text{on }\R^n.
\]
The Sylvester--Uhlmann construction \cite{SylvesterU87}, with the $C^\ell$ remainder estimate in \cite[Lemma~3.6]{MS26}, yields a smooth solution satisfying \r{43} for $|\zeta|$ sufficiently large. Since $\tilde q=q$ near $\B$, the first identity proves the assertion.
\end{proof}

For $\tau\ge1$, set
\begin{equation}\label{47}
X_\tau(v)=\|v\|_{L^2}+\tau^{-1}\|\nabla v\|_{L^2}+\tau^{-2}\|\nabla^2v\|_{L^2}.
\end{equation}

\begin{corollary}\label{cor1}
There are constants $\tau_0\ge1$ and $C>0$, depending only on $\B,n$ and $\|q\|_{L^\infty(\B)}$, such that, for $v\in H^2(\R^n)$ supported in $\B$ and $\zeta\in\C^n$ with $\zeta\cdot\zeta=0$ and $|\zeta|=\sqrt2\tau$, we have
\begin{equation}\label{48}
X_\tau(v)\le\frac{C}{\tau}\|(P_\zeta-q)v\|_{L^2},\qquad \tau\ge \tau_0.
\end{equation}
\end{corollary}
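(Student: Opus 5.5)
The plan is to reduce to the classical Sylvester--Uhlmann Carleman-type estimate for the conjugated Laplacian $P_\zeta=\Delta+2\zeta\cdot\nabla$, i.e.\ $e^{-x\cdot\zeta}\Delta e^{x\cdot\zeta}$, and then absorb the potential $q$ as a perturbation. Write $\zeta=\tau(\omega_1+\i\omega_2)$ with $\omega_1,\omega_2$ orthonormal; the condition $|\zeta|=\sqrt2\tau$ is exactly this normalization. On the Fourier side $P_\zeta$ has symbol $p(\xi)=-|\xi|^2+2\i\zeta\cdot\xi$, whose zero set is a codimension-two sphere of radius $\tau$. The first step is the basic estimate
\[
\|v\|_{L^2}\le \frac{C_\B}{\tau}\|P_\zeta v\|_{L^2},\qquad v\in H^2,\ \supp v\subset\B .
\]
I would prove it either by the weighted-space route of Sylvester--Uhlmann (the $L^2_\delta\to L^2_{\delta+1}$ bound for $P_\zeta^{-1}$, using that $v$ has compact support so weights are harmless on $\B$), or more directly by the Carleman argument. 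In the latter, write $P_\zeta$ in terms of the real and imaginary parts of $\zeta$, and conjugate further by a convexified weight $e^{\pm\eps(x\cdot\omega_1)^2/2}$. Then integrate by parts, which produces a positive commutator of size $\tau^2\|v\|^2$; the support in $\B$ bounds $x\cdot\omega_1$. Either version is standard, and I would cite \cite{SylvesterU87} or \cite[Lemma~3.6]{MS26}.

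The second step upgrades this to the derivative terms in $X_\tau$, using the equation itself. From $\Delta v=P_\zeta v-2\zeta\cdot\nabla v$ and the elliptic identity $\|\nabla^2 v\|=\|\Delta v\|$ (valid on $\R^n$ for compactly supported $v$), I would get $\|\nabla^2v\|\le\|P_\zeta v\|+C\tau\|\nabla v\|$. For the gradient, interpolate: $\|\nabla v\|^2=-\langle v,\Delta v\rangle\le\|v\|\|\nabla^2 v\|$. Combining these gives $\tau^{-1}\|\nabla v\|\le \delta\tau^{-2}\|\nabla^2 v\|+C_\delta\|v\|$. Substituting into the bound for $\|\nabla^2 v\|$ and absorbing yields
\[
\tau^{-2}\|\nabla^2 v\|+\tau^{-1}\|\nabla v\|\le C\bigl(\|v\|+\tau^{-2}\|P_\zeta v\|\bigr),
\]
and therefore $X_\tau(v)\le C\tau^{-1}\|P_\zeta v\|$ for $\tau\ge1$. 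This absorption is the one place needing care. If it does not close cleanly, the fallback is the Fourier multiplier bound $|\xi|\,|\xi|^2\lesssim\tau|p(\xi)|+\tau^2|\xi|$ and similar, localized away from and near the characteristic sphere.

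Finally, handle $q$. Since $v$ is supported in $\B$, we have $\|qv\|_{L^2}\le\|q\|_{L^\infty(\B)}\|v\|_{L^2}$. Hence
\[
X_\tau(v)\le\frac{C}{\tau}\|(P_\zeta-q)v\|+\frac{C\|q\|_{L^\infty(\B)}}{\tau}X_\tau(v),
\]
and choosing $\tau_0\ge 2C\|q\|_{L^\infty(\B)}$ lets us absorb the last term. The resulting constants depend only on $\B$, $n$ and $\|q\|_{L^\infty(\B)}$, as claimed. I expect the main obstacle to be the first step, namely the uniform $1/\tau$ bound for compactly supported functions, since everything else is elementary absorption; I would simply invoke the known estimate there rather than reprove it.
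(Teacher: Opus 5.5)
Your proposal is correct and is essentially the paper's argument: the paper also takes the unperturbed bound $X_\tau(v)\le C\tau^{-1}\|P_\zeta v\|_{L^2}$ as known (from \cite[Corollary~3.7]{MS26}), then absorbs $q$ using $\|qv\|_{L^2}\le\|q\|_{L^\infty(\B)}X_\tau(v)$ with $\tau_0$ large. Your extra second step, which recovers the derivative terms of $X_\tau$ from the $L^2$ bound via $\|\nabla^2v\|_{L^2}=\|\Delta v\|_{L^2}$ and interpolation, is not needed in the paper, but it closes cleanly as written, so the fallback is unnecessary.
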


\begin{proof}
The proof of \cite[Corollary~3.7]{MS26} requires only a bound for $q$ on the support of $v$. Indeed, for $\tau\ge1$,
\[
X_\tau(v)\le\frac{C}{\tau}\|P_\zeta v\|_{L^2}\le\frac{C}{\tau}\|(P_\zeta-q)v\|_{L^2}+\frac{C\|q\|_{L^\infty(\B)}}{\tau}X_\tau(v).
\]
Choose $\tau_0$ so that $C\|q\|_{L^\infty(\B)}/\tau_0\le1/2$ and absorb the last term to obtain \r{48}.
\end{proof}

The continuation argument of \cite[Lemma~3.8]{MS26} applies to the Helmholtz equation as follows.

\begin{lemma}\label{lemma11}
Let $g,\overline{g}$ be as in Lemma~\ref{lemma7}. After increasing $\tau_0$ from Corollary~\ref{cor1}, there are constants $c_0,C>0$ such that the following holds. Let $\zeta\cdot\zeta=0$, $|\zeta|=\sqrt2\tau$, and assume
\begin{equation}\label{49}
b_s\le c_0,\qquad \tau\ge \tau_0,\qquad \tau b_\infty\le c_0.
\end{equation}
Let $u_\zeta^0$ be the background solution of Lemma~\ref{lemma10}, with the $C^2(\B)$ bound in \r{43}. The solution $u_\zeta$ of $(\Delta_{\overline{g}}+k^2)u_\zeta=0$ in $M$ with boundary value $u_\zeta^0$ extends by $u_\zeta^0$ outside $M$ to a smooth function on $\R^n$ solving the same equation on a neighborhood of $\B$. It has the form
\begin{equation}\label{50}
u_\zeta=a^{-1}e^{x\cdot\zeta}(1+r_\zeta^0+r_\zeta),\qquad r_\zeta\in C_0^\infty(\R^n),\qquad \supp r_\zeta\subset M,
\end{equation}
with
\begin{equation}\label{51}
X_\tau(r_\zeta)\le C\tau b_2.
\end{equation}
The constants depend only on $M,g_0,k,s,F,\B$, the fixed extension of $c$, and the cutoff in Lemma~\ref{lemma10}.
\end{lemma}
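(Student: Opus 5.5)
We would follow the continuation argument of \cite[Lemma~3.8]{MS26}, using the Liouville identity \r{38} in place of its $k=0$ version. Since $k^2$ is not a Dirichlet eigenvalue of $-\Delta_{\overline g}$ (by closeness to $g_0$, as in Lemma~\ref{lemma6}), the solution $u_\zeta$ with boundary value $u_\zeta^0$ exists and is smooth. By Lemma~\ref{lemma7}, $\overline g-g_0$ is flat at $\partial M$. The difference $w=u_\zeta-u_\zeta^0$ vanishes on $\partial M$ and solves $(\Delta_{\overline g}+k^2)w=(\Delta_{g_0}-\Delta_{\overline g})u_\zeta^0$, whose right-hand side is flat at $\partial M$. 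The Neumann data of $w$ should vanish by the DN equality, after transporting $\Lambda_g(k)=\Lambda_{g_0}(k)$ to $\overline g$ via $\Phi$, which fixes $\partial M$. Successive normal differentiation, as in Lemma~\ref{lemma7}, then shows $w$ is flat, so extending $u_\zeta$ by $u_\zeta^0$ gives a smooth solution near $\B$ of the equation with extended $\overline g$. This also gives $\supp r_\zeta\subset M$ once $r_\zeta$ is defined by \r{50}.

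For the estimate, we write $v=e^{x\cdot\zeta}(1+r_\zeta^0+r_\zeta)$ and $u_\zeta=v/a$. Applying \r{38} and conjugating by $e^{x\cdot\zeta}$, the background part cancels because $P_\zeta r_\zeta^0=q(1+r_\zeta^0)$ near $\B$. With $D_\zeta v=e^{-x\cdot\zeta}\diver(B\nabla(e^{x\cdot\zeta}v))$, the equation becomes
\[
(P_\zeta-q)r_\zeta+e^{-x\cdot\zeta}\diver\bigl(B\nabla(e^{x\cdot\zeta}r_\zeta)\bigr)-q_Br_\zeta
=-e^{-x\cdot\zeta}\diver\bigl(B\nabla(e^{x\cdot\zeta}(1+r_\zeta^0))\bigr)+q_B(1+r_\zeta^0).
\]
The right side has $L^2$ norm bounded by $C\bigl(\tau^2 b_2+\tau\|\diver B\|_{L^2}+\|q_B\|_{L^2}\bigr)$ using \r{43} with $\ell=2$, hence by $C\tau^2b_2$ by \r{39}.

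For the left side, we expand $e^{-x\cdot\zeta}\diver(B\nabla(e^{x\cdot\zeta}r))=\diver(B\nabla r)+2\zeta\cdot B\nabla r+(\diver B\cdot\zeta)r+(\zeta\cdot B\zeta)r$. Its $L^2$ norm is at most
\[
C\bigl(b_\infty\|\nabla^2r\|+(b_\infty+\|\diver B\|_{L^\infty})\|\nabla r\|+\tau b_\infty\|\nabla r\|+(\tau\|\diver B\|_{L^\infty}+\tau^2b_\infty)\|r\|\bigr).
\]
This is $C\tau^2(b_\infty+\tau^{-1}b_s)X_\tau(r)$, and $\|q_Br\|\le Cb_sX_\tau(r)$. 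Corollary~\ref{cor1} applies since $r_\zeta$ is supported in $M\subset\B$, and gives
\[
X_\tau(r_\zeta)\le \frac{C}{\tau}\bigl(\tau^2b_2+\tau^2b_\infty X_\tau+\tau b_sX_\tau+b_sX_\tau\bigr).
\]
The perturbation coefficient is $C(\tau b_\infty+b_s)\le 2Cc_0$, so choosing $c_0$ small we absorb it and obtain \r{51}.

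The main obstacle is making the absorption legitimate. The term $\tau^{-1}\cdot\tau\|\diver B\|_{L^\infty}\tau\|r\|$ must be controlled by $\|\diver B\|_{L^\infty}\le Cb_s$ from \r{35}, with $\tau_0$ increased if necessary. We also need a priori that $r_\zeta\in H^2$ with compact support, which follows from the flatness established in the first step; this is why the boundary jet argument must come first.
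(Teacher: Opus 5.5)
Your proposal is correct and is essentially the argument the paper invokes from \cite[Lemma~3.8]{MS26}, using Lemmas~\ref{lemma9}, \ref{lemma10} and Corollary~\ref{cor1}: you extend $u_\zeta$ by $u_\zeta^0$ using flatness of $\overline g-g_0$ (Lemma~\ref{lemma7}) together with the DN equality transported to $\overline g$, and you then apply Corollary~\ref{cor1} to the conjugated equation obtained from \r{38}, \r{43} and \r{39}, absorbing the $O(\tau b_\infty+b_s)\,X_\tau(r_\zeta)$ perturbation. One small slip is in your last paragraph: after the factor $C/\tau$, the term $(\diver B\cdot\zeta)r_\zeta$ contributes only $C\|\diver B\|_{L^\infty}\|r_\zeta\|_{L^2}\le Cb_sX_\tau(r_\zeta)$, with no extra factor of $\tau$ and no need to enlarge $\tau_0$, exactly as your earlier display already shows.
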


\begin{proof}
The argument of \cite[Lemma~3.8]{MS26} applies with the potentials $q$ and $q_B$ defined above, using Lemmas~\ref{lemma9} and~\ref{lemma10} and Corollary~\ref{cor1}.
\end{proof}

\subsection{Estimates on the Fourier transform}\label{sec3g}
The following polarization lemma from the previous paper applies without change.

\begin{lemma}[{\cite[Lemma~3.9]{MS26}}]\label{lemma12}
For every $\omega\in\S^{n-1}$, one can choose $L=L(n)$ pairs $(z_\ell,w_\ell)\in\C^n\times\C^n$ such that
\[
z_\ell\cdot z_\ell=w_\ell\cdot w_\ell=0,\qquad z_\ell+w_\ell=-\i\omega,\qquad |z_\ell|=|w_\ell|=\sqrt2,
\]
and, for every complex symmetric matrix $S$,
\begin{equation}\label{55}
|S|\le C\left(\sum_{\ell=1}^L|S(z_\ell,w_\ell)|+|S\omega|\right).
\end{equation}
Here $S(z,w)=S^{ij}z_i w_j$, $|S|$ is the Frobenius norm, and $C$ depends only on $n$. The vectors may be chosen piecewise smoothly in $\omega$.
\end{lemma}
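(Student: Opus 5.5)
The construction is pure linear algebra in $\C^n$. By rotation invariance it suffices to treat $\omega=e_n$ and transport the resulting pairs by a rotation $R_\omega$ with $R_\omega e_n=\omega$. Choosing $R_\omega$ piecewise smoothly (for instance from two smooth charts of $\S^{n-1}$) gives the last assertion. Note that the quantities $S(z,w)$, $|S\omega|$ and $|S|$ transform covariantly, so the constant does not change.

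For $\omega=e_n$ I write $z=-\tfrac{\i}{2}e_n+\eta$ and $w=-\tfrac{\i}{2}e_n-\eta$. The condition $z\cdot z=w\cdot w=0$ then reads $\eta\cdot e_n=0$ and $\eta\cdot\eta=\tfrac14$. Writing $\eta=\alpha+\i\beta$ with $\alpha,\beta\perp e_n$ real, this becomes $|\alpha|^2-|\beta|^2=\tfrac14$ and $\alpha\cdot\beta=0$. The normalization $|z|^2=\tfrac14+|\alpha|^2+|\beta|^2=2$ forces $|\alpha|^2+|\beta|^2=\tfrac74$, so that $|\alpha|^2=1$ and $|\beta|^2=\tfrac34$. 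Hence the admissible pairs are parametrized by orthogonal $\alpha,\beta\in e_n^\perp$ with these lengths, and this uses $n\ge3$.

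Next I expand $S(z,w)$ by symmetry of $S$:
\[
S(z,w)=-\tfrac14 S(e_n,e_n)-S(\eta,\eta)+\tfrac{\i}{2}\bigl(S(\eta,e_n)-S(e_n,\eta)\bigr)=-\tfrac14 S_{nn}-S(\eta,\eta).
\]
The entries $S_{in}=(Se_n)_i$ are bounded by $|S\omega|$, so the whole task reduces to the following claim: the quadratic forms $S'(\eta,\eta)$, for the admissible $\eta$ above, control the tangential block $S'=(S_{ij})_{i,j<n}$. Since $\eta\mapsto S'(\eta,\eta)$ depends only on $\eta\otimes\eta$, it is enough to show that the matrices $\eta\otimes\eta$, with $\eta$ admissible, span the space of complex symmetric $(n-1)\times(n-1)$ matrices. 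Once that holds, finitely many $\eta_\ell$, with $L$ depending only on $n$, give a basis $\{\eta_\ell\otimes\eta_\ell\}$. Inverting the linear map $S'\mapsto(S'(\eta_\ell,\eta_\ell))_\ell$ then yields
\[
|S'|\le C\sum_\ell|S'(\eta_\ell,\eta_\ell)|\le C\Bigl(\sum_\ell|S(z_\ell,w_\ell)|+|S\omega|\Bigr),
\]
which is \r{55}.

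The main obstacle is this spanning claim, and I would argue it by duality. Suppose a complex symmetric $T$ is orthogonal, in the bilinear pairing, to all $\eta\otimes\eta$, so that $T(\alpha+\i\beta,\alpha+\i\beta)=0$ for all admissible $(\alpha,\beta)$. Replacing $\beta$ by $-\beta$ and adding gives $T(\alpha,\alpha)=T(\beta,\beta)$ and $T(\alpha,\beta)=0$. Homogeneity then gives $T(\alpha,\alpha)=\tfrac43 T(\beta,\beta)$ for unit orthogonal $\alpha,\hat\beta$, and symmetry gives the reverse relation, so $T(\alpha,\alpha)=0$ for every unit $\alpha\in e_n^\perp\cap\R^n$. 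Polarization then forces $T=0$.

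This argument needs $\dim e_n^\perp\ge2$, which holds since $n\ge3$, so that every unit $\alpha$ has an orthogonal partner. In this case the constant $C$ depends only on $n$.
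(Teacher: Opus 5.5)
Your proof is correct. This paper does not prove the lemma; it quotes it from \cite[Lemma~3.9]{MS26}. Your argument is therefore a self-contained proof rather than a variant of one written here.

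The reduction is right. The constraints force $z=-\tfrac{\i}{2}\omega+\alpha+\i\beta$ and $w=-\tfrac{\i}{2}\omega-\alpha-\i\beta$, with real $\alpha\perp\beta$ in $\omega^\perp$, $|\alpha|=1$ and $|\beta|=\sqrt3/2$. The mixed terms cancel by symmetry of $S$, so $S(z,w)=-\tfrac14S(\omega,\omega)-S(\eta,\eta)$. Conjugation by a real rotation preserves all three quantities in \eqref{55}, so the constant is uniform in $\omega$.

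There is a harmless slip in the duality step. From $T(\alpha,\alpha)=T(\beta,\beta)$ and $\beta=\tfrac{\sqrt3}{2}\hat\beta$ you get $T(\alpha,\alpha)=\tfrac34T(\hat\beta,\hat\beta)$, not $\tfrac43T(\beta,\beta)$. Exchanging the roles of $\alpha$ and $\hat\beta$ then gives $T(\alpha,\alpha)=\tfrac{9}{16}T(\alpha,\alpha)$, hence $T(\alpha,\alpha)=0$. Any factor whose square is not $1$ would do.

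The only real difference from a direct proof is that your spanning/duality argument does not produce explicit pairs or an explicit constant. The same computation can be run constructively. Take an orthonormal frame $e_1,\dots,e_{n-1}$ of $\omega^\perp$ and $\eta=e_i\pm\i\tfrac{\sqrt3}{2}e_j$ with $i\ne j$. Then $S(z,w)=-\tfrac14S(\omega,\omega)-S(e_i,e_i)+\tfrac34S(e_j,e_j)\mp\i\sqrt3\,S(e_i,e_j)$. The two signs recover $S(e_i,e_j)$ and $S(e_i,e_i)-\tfrac34S(e_j,e_j)$, modulo terms bounded by $|S\omega|$. Exchanging $i$ and $j$ gives a $2\times2$ system for the diagonal entries with determinant $1-\tfrac{9}{16}\ne0$.

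That version gives explicit $L$ and $C$, with piecewise smoothness coming from a piecewise smooth choice of frame. Yours reaches the same conclusion with $L=n(n-1)/2$ pairs, at the cost of an unspecified constant.
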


For $S=\widehat B(\xi)$ and $\omega=\xi/|\xi|$, the additional term in \r{55} is controlled by Lemma~\ref{lemma8}. Indeed, $\widehat{\diver B}(\xi)=\i\widehat B(\xi)\xi$, so Plancherel's identity and \r{30} imply
\begin{equation}\label{56}
\left\|\mathbf1_{\{|\xi|>T\}}\widehat B(\xi)\frac{\xi}{|\xi|}\right\|_{L^2}\le\frac{C}{T}\|\diver B\|_{L^2}\le\frac{C}{T}b_2,\qquad T>0.
\end{equation}

We next estimate the errors in the background complex geometric optics amplitudes.

\begin{lemma}[\!{\cite[Lemma~3.10]{MS26}}]\label{lemma13}
Let $e(x,\xi)=(e^{ij}(x,\xi))$ be measurable in $\xi$ and $C^{n+1}$ in $x$, with support in a fixed compact set in $x$, and suppose that
\begin{equation}\label{57}
|\partial_x^\alpha e(x,\xi)|\le C|\xi|^{-1},\qquad |\alpha|\le n+1,\quad |\xi|\ge \tau_0.
\end{equation}
For a square-integrable matrix field $B$, define
\begin{equation}\label{58}
E_TB(\xi)=\mathbf1_{\{|\xi|>T\}}\int_{\R^n}e^{-\i x\cdot\xi}e^{ij}(x,\xi)B^{ij}(x)\,\d x,\qquad T\ge \tau_0.
\end{equation}
Then
\begin{equation}\label{59}
\|E_TB\|_{L^2}\le\frac{C}{T}\|B\|_{L^2},
\end{equation}
where $C$ depends only on $n$, the fixed compact support, and the constant in \r{57}.
\end{lemma}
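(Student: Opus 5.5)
The plan is to treat $E_T$ as an oscillatory integral operator and convert the $x$-regularity of the symbol into decay in a dual variable. Pick $\chi\in C_0^\infty(\R^n)$ equal to $1$ on the fixed compact support. Replace $B$ by $\chi B$, which does not change $E_TB$, and then decompose the symbol by Fourier transforming in $x$. Write
\[
e^{ij}(x,\xi)=(2\pi)^{-n}\int_{\R^n}e^{\i x\cdot\eta}\,\widehat e^{ij}(\eta,\xi)\,\d\eta ,
\]
where $\widehat e$ is the Fourier transform in $x$. Integrating by parts $n+1$ times in $x$ and using \r{57} together with the compact support gives
\[
|\widehat e^{ij}(\eta,\xi)|\le C\langle\eta\rangle^{-n-1}|\xi|^{-1},\qquad |\xi|\ge\tau_0 .
\]

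Substituting the decomposition into \r{58} yields
\[
E_TB(\xi)=(2\pi)^{-n}\,\mathbf1_{\{|\xi|>T\}}\int_{\R^n}\widehat e^{ij}(\eta,\xi)\,\widehat{B^{ij}}(\xi-\eta)\,\d\eta .
\]
Hence, for $T\ge\tau_0$,
\[
|E_TB(\xi)|\le \frac{C}{T}\int_{\R^n}\langle\eta\rangle^{-n-1}\,|\widehat B(\xi-\eta)|\,\d\eta .
\]
This is a convolution of $|\widehat B|$ with the integrable kernel $\langle\eta\rangle^{-n-1}$. Young's inequality ($L^1*L^2\subset L^2$) and Plancherel then give $\|E_TB\|_{L^2}\le CT^{-1}\|B\|_{L^2}$, which is \r{59}.

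The main technical point is justifying the Fourier decomposition when $e$ is only measurable in $\xi$. For each fixed $\xi$, $e(\cdot,\xi)$ is $C^{n+1}$ with compact support, so $\widehat e(\cdot,\xi)$ is integrable and the inversion formula holds pointwise. Joint measurability of $\widehat e$ in $(\eta,\xi)$ follows from Fubini, since $e$ is bounded and compactly supported in $x$. To apply Fubini to the double integral defining $E_TB(\xi)$, I first take $B$ in $L^1\cap L^2$ with compact support and then conclude by density, using the a priori bound. The constant obtained depends only on $n$, the measure of the support, and the constant in \r{57}, as claimed.
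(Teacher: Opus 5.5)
Your proof is correct and follows the standard argument for the cited \cite[Lemma~3.10]{MS26}, which is the approach the hypotheses ($n+1$ derivatives in $x$, only measurability in $\xi$) are set up for. The $x$-derivatives give $|\widehat e(\eta,\xi)|\le C\langle\eta\rangle^{-n-1}|\xi|^{-1}$, so $E_TB$ is dominated pointwise by $T^{-1}$ times a convolution of $|\widehat{\chi B}|$ with an integrable kernel, and Young's inequality (equivalently Schur's test) together with Plancherel yields \r{59}. Two small remarks: the density step is unnecessary, since $\chi B\in L^1\cap L^2$ already justifies Fubini; and measurability of $E_TB$ follows because $e$, being continuous in $x$ and measurable in $\xi$, is jointly measurable.
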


We next adapt the Fourier estimate of \cite[Lemma~3.11]{MS26} to the Helmholtz equation.

\begin{lemma}\label{lemma14}
Let $g,\overline{g}$ be as in Lemma~\ref{lemma7}, with $B$ as in \r{20}. After increasing $\tau_0$ and decreasing $c_0$ from Lemma~\ref{lemma11}, if necessary, we have
\begin{equation}\label{60}
\big\|\mathbf1_{\{|\xi|>T\}}\widehat B\big\|_{L^2}\le C\bigl(R^{n/2+1}b_2^2+T^{-1}b_2+R^{-s}b_s\bigr)
\end{equation}
whenever
\begin{equation}\label{61}
b_s\le c_0,\qquad R\ge T\ge \tau_0,\qquad Rb_\infty\le c_0.
\end{equation}
The constant $C$ has the same dependence as in Lemma~\ref{lemma11} and is independent of $B,T,R$.
\end{lemma}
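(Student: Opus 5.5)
We follow the scheme of \cite[Lemma~3.11]{MS26}: insert complex geometric optics solutions into the Alessandrini identity, bound the Fourier transform of $B$ along each polarization pair, and recover the full matrix with Lemma~\ref{lemma12}. Throughout we write $S(z,w)$ as in Lemma~\ref{lemma12}.

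\textbf{Step 1: the identity in the gauge.} Since $\Phi$ fixes $\partial M$ pointwise, $\Lambda_{\overline g}(k)=\Lambda_g(k)=\Lambda_{g_0}(k)$. Hence \r{19} holds with $g$ replaced by $\overline g$. We use $A_{\overline g}-A_0=a^2B$ and $|\overline g|^{1/2}-|g_0|^{1/2}=a^2e^{2c}(\det(\Id+B)^{1/(n-2)}-1)$. We insert $u=u_{\zeta}$ from Lemma~\ref{lemma11} and $v=u^0_{\eta}$ from Lemma~\ref{lemma10}.

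Fix $\xi$ with $T<|\xi|\le R$ and set $\omega=\xi/|\xi|$. For each pair $(z_\ell,w_\ell)$ from Lemma~\ref{lemma12}, put $\zeta=|\xi|z_\ell$ and $\eta=|\xi|w_\ell$. Then $\zeta+\eta=-\i\xi$, $|\zeta|=|\eta|=\sqrt2|\xi|$, and $\tau=|\xi|\le R$. Hence $\tau b_\infty\le Rb_\infty\le c_0$, so \r{49} holds.

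\textbf{Step 2: expanding the gradients.} Write $\nabla u_\zeta=a^{-1}e^{x\cdot\zeta}\bigl(\zeta(1+r^0_\zeta+r_\zeta)+\nabla r^0_\zeta+\nabla r_\zeta-(1+\dots)\nabla\log a\bigr)$, and similarly for $\nabla v$. Divide the identity by $|\xi|^2$. The leading term is
\[
\int e^{-\i x\cdot\xi}B(z_\ell,w_\ell)\,\d x=\widehat B(\xi)(z_\ell,w_\ell).
\]
The remaining terms fall into three groups.
\begin{enumerate}
\item Terms containing only background errors $r^0$, $\nabla r^0/|\xi|$ and $\nabla\log a/|\xi|$ are of the form $E_TB(\xi)$ in Lemma~\ref{lemma13}, since \r{43} holds with $\ell=n+1$. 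The $k^2$ term and the pieces involving the volume-density difference are handled the same way; both carry an extra factor $|\xi|^{-2}$. The density difference is linearized as $\tr B/(n-2)+O(|B|^2)$, and the quadratic remainder goes into the group below.
\item Terms containing $r_\zeta$ or $\nabla r_\zeta$ are quadratic in $B$. By Cauchy--Schwarz and \r{51}, they are bounded pointwise in $\xi$ by $C\|B\|_{L^2}X_\tau(r_\zeta)\le C|\xi|b_2^2$.
\item The quadratic density remainder is bounded by $b_2^2$.
\end{enumerate}
Hence, for $T<|\xi|\le R$,
\[
|\widehat B(\xi)(z_\ell,w_\ell)|\le |E_TB(\xi)|+C|\xi|\,b_2^2.
\]
Squaring and integrating over $T<|\xi|\le R$ gives an $L^2$ bound $C(T^{-1}b_2+R^{n/2+1}b_2^2)$, using $\int_{|\xi|\le R}|\xi|^2\,\d\xi\sim R^{n+2}$.

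\textbf{Step 3: recovering the full matrix.} Apply \r{55} with $S=\widehat B(\xi)$. The term $|S\omega|$ contributes $CT^{-1}b_2$ by \r{56}. On $|\xi|>R$ we use $\|\mathbf 1_{\{|\xi|>R\}}\widehat B\|_{L^2}\le R^{-s}b_s$. Summing the two regions gives \r{60}.

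\textbf{Main obstacle.} The hard part is measurability and uniformity. The pairs depend on $\xi$ only piecewise smoothly, and the solutions $u^0$ and $u_\zeta$ must be chosen measurably in $\xi$. We also need the symbols $e^{ij}(x,\xi)$ built from $r^0_{|\xi|z_\ell}$ to satisfy \r{57} uniformly. We would handle this as in \cite{MS26}: fix the construction of Lemma~\ref{lemma10} as an explicit Faddeev-type solution operator, which depends continuously on $\zeta$. Finally, we would carefully check that the $k^2$ terms do not spoil the $|\xi|^{-1}$ decay.
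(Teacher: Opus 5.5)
Your proposal is correct and follows essentially the same route as the paper. You insert $u_\zeta$ and $u^0_\eta$ into the Alessandrini identity for $\overline g$, handle the background errors with Lemma~\ref{lemma13} and the $r_\zeta$ terms with \r{51}, and recover $\widehat B$ from Lemma~\ref{lemma12}, \r{56} and the $H^s$ tail. Two small points. First, the amplitude contains $\rho^{-1}\nabla r^0$, so \r{57} with $n+1$ derivatives in $x$ needs the $C^{n+2}(\B)$ bound of Lemma~\ref{lemma10} (i.e., $\ell=n+2$, not $n+1$), as the paper uses. Second, linearizing the density difference is unnecessary: the paper applies the scalar version of Lemma~\ref{lemma13} directly to $|\overline g|^{1/2}-|g_0|^{1/2}$, whose $L^2$ norm is at most $Cb_2$.
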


\begin{proof}
Write $\xi=\rho\omega$ and take $\zeta=\rho z_\ell$, $\eta=\rho w_\ell$ with the vectors of Lemma~\ref{lemma12}. Choose the background solutions with the $C^{n+2}(\B)$ bound in Lemma~\ref{lemma10}, increasing $\tau_0$ if necessary. For $T<\rho\le R$, the assumptions of Lemma~\ref{lemma11} hold. Applying \r{19} to $\overline{g}$, by invariance of the DN map, we obtain
\begin{equation}\label{62}
\int_M a^2B^{ij}(\partial_i u_\zeta^0)\,(\partial_j u_\eta^0)\,\d x
=-\int_M a^2B^{ij}(\partial_i(u_\zeta-u_\zeta^0))\,(\partial_j u_\eta^0)\,\d x +k^2\int_M\bigl(|\overline{g}|^{1/2}-|g_0|^{1/2}\bigr)u_\zeta u_\eta^0\,\d x.
\end{equation}
The gradient terms are treated as in \cite[Lemma~3.11]{MS26}. We estimate the additional term, divided by $\rho^2$, after writing $u_\zeta=u_\zeta^0+(u_\zeta-u_\zeta^0)$. By the estimate for the volume-density difference in the proof of Lemma~\ref{lemma6},
\[
\bigl\||\overline{g}|^{1/2}-|g_0|^{1/2}\bigr\|_{L^2}\le Cb_2.
\]
This density difference is supported in $M$ by Lemma~\ref{lemma7}.

For the two background solutions, use $\zeta+\eta=-\i\xi$ and choose $\chi\in C_0^\infty(\B^{\rm int})$ equal to $1$ near $M$. The amplitude multiplying the density difference is
\[
\frac{k^2\chi(x)}{\rho^2a^2}(1+r_\zeta^0)(1+r_\eta^0)
=O_{C^{n+1}}(k^2\rho^{-2}),\qquad \rho\ge\tau_0.
\]
Since $\rho^{-2}\le T^{-1}\rho^{-1}$ for $\rho>T$, the scalar version of Lemma~\ref{lemma13}, applied to $T$ times this amplitude with the cutoff $\rho>T$, implies
\[
\left\|\mathbf1_{\{|\xi|>T\}}\frac{k^2}{\rho^2}\int_M\bigl(|\overline{g}|^{1/2}-|g_0|^{1/2}\bigr)u_\zeta^0u_\eta^0\,\d x\right\|_{L^2_\xi}
\le Ck^2T^{-2}b_2.
\]
For the part containing the correction, cancellation of the exponentials and Lemma~\ref{lemma11} imply
\[
\begin{split}
\frac{k^2}{\rho^2}\left|\int_M\bigl(|\overline{g}|^{1/2}-|g_0|^{1/2}\bigr)(u_\zeta-u_\zeta^0)u_\eta^0\,\d x\right|
\le\frac{Ck^2}{\rho^2}\bigl\||\overline{g}|^{1/2}-|g_0|^{1/2}\bigr\|_{L^2}\|r_\zeta\|_{L^2} \le Ck^2\rho^{-1}b_2^2.
\end{split}
\]
Its $L^2$ norm on $T<|\xi|\le R$ is therefore at most $Ck^2R^{n/2+1}b_2^2$, since $\rho\ge1$. As $T\ge1$ and $k$ is fixed, both contributions have the bounds required in \r{60}, with constants depending also on $k$. The polarization and Sobolev tail arguments of \cite[Lemma~3.11]{MS26} complete the proof.
\end{proof}

\subsection{Compactness}\label{sec3h} 
If the local rigidity statement of Theorem~\ref{thm2} fails, we have a sequence of nonzero perturbations $B_j$ with $\|B_j\|_{H^s}\to0$. We now prove that their $L^2$-normalizations have a  convergent subsequence.

\begin{lemma}\label{lemma15}
Let $B_j\ne0$ be conductivity perturbations satisfying the hypotheses of Lemma~\ref{lemma14}, with constants $C,c_0,\tau_0$ independent of $j$, and write $b_{2,j},b_{s,j},b_{\infty,j}$ for their norms as in \r{34}. If $b_{s,j}\to0$, then a subsequence of $H_j=B_j/b_{2,j}$ converges  in $L^2(\R^n)$ to a tensor $H$ satisfying
\[
\supp H\subset M,\qquad \|H\|_{L^2}=1.
\]
\end{lemma}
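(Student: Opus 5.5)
The approach is the Kolmogorov--Riesz criterion in $L^2(\R^n)$. The normalized tensors $H_j=B_j/b_{2,j}$ satisfy $\|H_j\|_{L^2}=1$ and $\supp H_j\subset M$, by Lemma~\ref{lemma7} and the zero extension. Since supports lie in a fixed compact set, precompactness in $L^2$ reduces, via Plancherel, to uniform smallness of the high-frequency tails:
\[
\lim_{T\to\infty}\sup_j\big\|\mathbf1_{\{|\xi|>T\}}\widehat H_j\big\|_{L^2}=0 .
\]
Indeed, $\widehat H_j$ are entire and uniformly bounded with bounded derivatives on compacts, because $\|H_j\|_{L^1}\le C$ and $\supp H_j\subset M$. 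Hence, by Arzel\`a--Ascoli and a diagonal argument, a subsequence of $\widehat H_j$ converges locally uniformly, and so in $L^2(\{|\xi|\le T\})$ for every $T$. Combined with the uniform tail bound, $\widehat H_j$ is Cauchy in $L^2$. The limit $H$ then has norm $1$, and its support lies in $M$ because $L^2$ limits of functions supported in the closed set $M$ remain supported there.

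The main step is to obtain the uniform tail bound from Lemma~\ref{lemma14}, by choosing $R=R_j$ depending on $j$. Dividing \r{60} by $b_{2,j}$ gives
\[
\big\|\mathbf1_{\{|\xi|>T\}}\widehat H_j\big\|_{L^2}\le C\Bigl(R^{n/2+1}b_{2,j}+T^{-1}+R^{-s}\frac{b_{s,j}}{b_{2,j}}\Bigr),
\]
valid when $R\ge T\ge\tau_0$ and $Rb_{\infty,j}\le c_0$. For the constraint, \r{36} gives $b_{\infty,j}\le C b_{2,j}^{1/s}b_{s,j}^{1-1/s}$. Write $\theta_j=b_{2,j}/b_{s,j}\le1$. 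I would take $R_j=\theta_j^{-\alpha}b_{s,j}^{-\beta}$, with exponents chosen so that three quantities tend to $0$ as $j\to\infty$, or at least are bounded by $\delta$ eventually:
\[
R_j^{-s}\theta_j^{-1},\qquad R_j^{n/2+1}\theta_j b_{s,j},\qquad R_j\,\theta_j^{1/s}b_{s,j}.
\]
Taking $\alpha=1/s$ makes the first quantity equal to $b_{s,j}^{s\beta}$, the second $\theta_j^{1-(n/2+1)/s}b_{s,j}^{1-\beta(n/2+1)}$, and the third $b_{s,j}^{1-\beta}$ (the $\theta$ powers cancel). Since $s>n/2+1$, the $\theta$ exponent in the second is positive, so it is bounded by $b_{s,j}^{1-\beta(n/2+1)}$. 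Choosing $0<\beta<1/(n/2+1)$, all three quantities tend to $0$ as $b_{s,j}\to0$. Also $R_j\ge b_{s,j}^{-\beta}\to\infty$, so $R_j\ge T$ for large $j$, and $R_jb_{\infty,j}\le c_0$ eventually.

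Hence, for each fixed $T\ge\tau_0$, $\limsup_j\|\mathbf1_{\{|\xi|>T\}}\widehat H_j\|_{L^2}\le C/T$. The finitely many early indices are handled individually, since each $H_j\in L^2$ has tails tending to $0$. This gives the uniform tail condition and completes the argument. I expect the main obstacle to be the exponent bookkeeping just described, in particular checking that the case $\theta_j\to0$, where $b_2$ is much smaller than $b_s$, is covered; the choice $\alpha=1/s$ is exactly what makes the interpolation \r{36} compatible with the Sobolev tail term. The condition $b_{s,j}\le c_0$ in \r{61} holds for large $j$.
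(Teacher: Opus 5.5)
Your proof is correct and follows essentially the paper's route. The paper's proof just invokes the compactness argument of \cite[Lemma~3.12]{MS26} with Lemma~\ref{lemma14} and the interpolation bound \r{36}: a uniform high-frequency tail bound for $H_j$, obtained by choosing $R=R_j\to\infty$ with $R_jb_{\infty,j}\le c_0$, combined with the fixed compact support. That is exactly what you carry out, and your exponent bookkeeping checks out: with $\alpha=1/s$ the three quantities are $b_{s,j}^{s\beta}$, at most $b_{s,j}^{1-\beta(n/2+1)}$, and at most $Cb_{s,j}^{1-\beta}$.
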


\begin{proof}
The compactness argument of \cite[Lemma~3.12]{MS26} applies, using Lemma~\ref{lemma14} and the interpolation estimate \r{36}.
\end{proof}

\subsection{The linearized problem}\label{sec3i}
We identify below covariant and contravariant tensors using $\e$ and denote the Euclidean tensor by $\e$ in either case. 
For a smooth positive function $a$, and a vector field $X$, define the normalized infinitesimal gauge by
\begin{equation}\label{72}
\mathcal G_aX=DX+DX^\top-\bigl(\diver X+2X\cdot\nabla\log a\bigr)\e.
\end{equation}
As shown in the proof of Lemma~\ref{lemma16}, $a^2\mathcal G_aX$ is the variation of $A_0=a^2\Id$ under pushforward by the flow of $X$.

\begin{lemma}\label{lemma16}
Let $X\in H_0^1(M;\R^n)$, set $H=\mathcal G_aX$, and let $u,v\in C^\infty(M)$ satisfy $(\Delta_{g_0}+k^2)u=(\Delta_{g_0}+k^2)v=0$. Then
\begin{equation}\label{73}
\int_M\left(a^2H^{ij}(\partial_i u)\,(\partial_j v)-\frac{k^2}{n-2}|g_0|^{1/2}(\tr H)uv\right)\,\d x=0,
\end{equation}
and
\begin{equation}\label{74}
\begin{split}
\bigl(L_{A_0}+k^2|g_0|^{1/2}\bigr)(-X\cdot\nabla u)
&=-\diver(a^2H\nabla u) -\frac{k^2}{n-2}|g_0|^{1/2}(\tr H)u
\end{split}
\end{equation}
in the distributional sense. The same identities hold on $\R^n$ for compactly supported $X\in H^1(\R^n;\R^n)$ and $u,v\in C^\infty(\R^n)$ satisfying the background Helmholtz equation on a neighborhood of $\supp X$.
\end{lemma}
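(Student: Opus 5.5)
Both identities express the first variation of the Helmholtz operator under pushforward of $g_0$ by the flow $\phi_t$ of $X$. I would establish \r{74} first and deduce \r{73} from it by pairing with $v$ and integrating by parts.

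\emph{Step 1: the variation of $A_0$ and of the density.} For smooth $X$ vanishing on $\partial M$ (or compactly supported), let $g_t=(\phi_t)_*g_0$. The conductivity transforms as a density-weighted contravariant tensor, $A_{g_t}=\bigl(|D\phi_t|^{-1}D\phi_t\,A_0\,D\phi_t^\top\bigr)\circ\phi_t^{-1}$. Differentiating at $t=0$ gives
\[
\frac{d}{dt}\Big|_{t=0}A_{g_t}=a^2(DX+DX^\top)-a^2(\diver X)\Id-X\cdot\nabla(a^2)\Id=a^2\mathcal G_aX ,
\]
which justifies the remark after \r{72}. For the density, $|g_t|^{1/2}$ transforms as a scalar density, so its derivative is $-\diver(|g_0|^{1/2}X)$. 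Since $|g_0|^{1/2}=e^{nc}=a^{2n/(n-2)}$, this equals
\[
-|g_0|^{1/2}\Bigl(\diver X+\tfrac{2n}{n-2}X\cdot\nabla\log a\Bigr).
\]
Taking the trace, $\tr H=(2-n)\diver X-2nX\cdot\nabla\log a$. Hence the derivative of the density equals $\frac{1}{n-2}|g_0|^{1/2}\tr H$, consistent with $|\overline g|^{1/2}=|g_0|^{1/2}\det(\Id+B)^{1/(n-2)}$ linearized in $B$.

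\emph{Step 2: differentiate the invariance identity.} By diffeomorphism invariance, $u_t=u\circ\phi_t^{-1}$ satisfies $(L_{A_{g_t}}+k^2|g_t|^{1/2})u_t=0$ near $\supp X$; outside $\supp X$ we have $u_t=u$. Differentiating at $t=0$, with $\dot u=-X\cdot\nabla u$, gives
\[
(L_{A_0}+k^2|g_0|^{1/2})\dot u+\diver(a^2H\nabla u)+\frac{k^2}{n-2}|g_0|^{1/2}(\tr H)u=0,
\]
which is \r{74} for smooth $X$. Alternatively, and more robustly, one can verify \r{74} by direct computation: expand both sides using $L_{A_0}u=-k^2|g_0|^{1/2}u$ and commute $X\cdot\nabla$ with $L_{A_0}$. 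This is routine but tedious, and it is the step I expect to need the most care, namely tracking the $\nabla\log a$ terms so that they combine exactly into $\mathcal G_a$ and $\tr H$. I would confirm the coefficient $1/(n-2)$ by checking the case $a$ constant, $X$ a dilation field locally.

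\emph{Step 3: density and \r{73}.} Both sides of \r{74} are bilinear, continuous expressions in $X\in H^1$ with values in $H^{-1}$ (since $u$ is smooth). Approximating $X\in H_0^1(M)$ by $C_0^\infty$ fields therefore gives \r{74} distributionally. Now pair \r{74} with $v$ over $M$. Since $X\cdot\nabla u\in H_0^1(M)$ and $v$ solves the background equation, the symmetric left side gives $\int(-X\cdot\nabla u)(L_{A_0}+k^2|g_0|^{1/2})v=0$ after integrating by parts; the boundary terms vanish because $X|_{\partial M}=0$. On the right, integrating by parts $-\int\diver(a^2H\nabla u)v=\int a^2H^{ij}\partial_iu\,\partial_jv$ leaves no boundary term, since $H\in L^2$ and the distributional pairing is that of $H^{-1}$ with the test function $v$ restricted against compactly supported approximants. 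This yields \r{73}. The $\R^n$ version is identical, using that the equations hold on a neighborhood of $\supp X$.
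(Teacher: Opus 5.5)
Your proposal is correct and follows essentially the same route as the paper. Both arguments differentiate $(L_{A_{g_t}}+k^2|g_t|^{1/2})(u\circ\varphi_t^{-1})=0$ at $t=0$, using $a^2\mathcal G_aX$ as the conductivity variation and $-\diver(|g_0|^{1/2}X)=\frac{1}{n-2}|g_0|^{1/2}\tr H$ as the density variation, then pair with $v$ and extend by $H^1$ density. One small point to tighten: do the pairing with $v$ for smooth compactly supported $X$, where no boundary terms arise, and only then extend \eqref{73} by density, as the paper does. The interior distributional identity \eqref{74} alone does not determine its pairing with a $v$ that is nonzero on $\partial M$.
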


\begin{proof}
First let $X$ be smooth and compactly supported, let $\varphi_t$ be its flow, and set $g_t=(\varphi_t)_*g_0$. By \cite[Lemma~3.13]{MS26}, the conductivity variation is $\left.\frac{\d}{\d t}A_{g_t}\right|_{t=0}=a^2H$. The volume density transforms by
\[
|g_t|^{1/2}=\left(\frac{|g_0|^{1/2}}{\det D\varphi_t}\right)\circ\varphi_t^{-1},
\]
so
\[
\left.\frac{\d}{\d t}|g_t|^{1/2}\right|_{t=0}
=-\diver\bigl(|g_0|^{1/2}X\bigr)
=\frac{|g_0|^{1/2}}{n-2}\tr H.
\]
The last equality follows from \r{72} and $|g_0|^{1/2}=a^{2n/(n-2)}$. Differentiating
\[
\bigl(L_{A_{g_t}}+k^2|g_t|^{1/2}\bigr)(u\circ\varphi_t^{-1})=0\quad\text{near }\supp X
\]
at $t=0$ proves \r{74}. Testing against $v$ and integrating by parts proves \r{73}, since $(L_{A_0}+k^2|g_0|^{1/2})v=0$ near the support of $X\cdot\nabla u$. Both identities extend by $H^1$ density to the stated classes of vector fields, as in \cite[Lemma~3.13]{MS26}.
\end{proof}

We can call $\mathcal G_aX$ with $X\in H_0^1(M;\R^n)$ \emph{potential tensor fields}, by analogy with the X-ray geodesic transform of symmetric two-tensor fields. Then \r{73} is the analog of the fact that the geodesic X-ray transform of two-tensor fields vanishes on potential ones. 

The next two lemmas prepare the proof that the kernel of the linearization consists exactly of potential tensors $H=\mathcal G_aX$, with $X\in H_0^1(M;\R^n)$. Background complex geometric optics solutions yield a condition on $\widehat H$, which the next lemma converts into the decomposition $H=DX+DX^\top+\beta\e$. By \r{72}, this can be rewritten as $H=\mathcal G_aX+\sigma\e$. Lemma~\ref{lemma18} eliminates the scalar remainder $\sigma$, and we then show that $X$ vanishes outside $M$. In the completion of the proof, the limiting gauge condition forces $X=0$, contradicting the unit norm of the limit obtained by compactness.
\begin{lemma}[\!{\cite[Lemma~3.14]{MS26}}]\label{lemma17}
Let $H\in L^2(\R^n)$ be a real symmetric tensor with compact support, and assume that for almost every $\xi\ne0$ the restriction of $\widehat H(\xi)$ to $\xi^\perp$ is a scalar multiple of the Euclidean form. Then there are compactly supported $X\in H^1(\R^n;\R^n)$ and $\beta\in L^2(\R^n)$ such that
\begin{equation}\label{75}
H=DX+DX^\top+\beta\e.
\end{equation}
\end{lemma}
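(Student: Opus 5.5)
Throughout, write $\xi=|\xi|\omega$ and $P_\omega=\Id-\omega\otimes\omega$. The hypothesis says that for a.e.\ $\xi\ne0$ there is a scalar $\gamma(\xi)$ with
\[
P_\omega\widehat H(\xi)P_\omega=\gamma(\xi)P_\omega .
\]
The plan is to work entirely on the Fourier side: construct $\widehat X$ and $\widehat\beta$ algebraically, then check that $X$ lies in $H^1$ and that both $X$ and $\beta$ have compact support.

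\textbf{Algebraic construction.} A symmetric matrix $S$ with $P_\omega SP_\omega=\gamma P_\omega$ decomposes as
\[
S=\gamma\Id+(S\omega-\gamma\omega)\otimes\omega+\omega\otimes(S\omega-\gamma\omega)+(\omega\cdot S\omega-\gamma)\,\omega\otimes\omega-(\omega\cdot S\omega-\gamma)\,\omega\otimes\omega .
\]
In practice I will take the ansatz $\widehat X(\xi)=-\i|\xi|^{-1}\bigl(p\,\omega+\mu\,\omega\bigr)$ with $p\perp\omega$ and solve for the scalar part. The Fourier transform of $DX+DX^\top$ is $\i(\xi\otimes\widehat X+\widehat X\otimes\xi)$. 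For $p\perp\omega$, the part $\omega\otimes p+p\otimes\omega$ has zero $\xi^\perp$-block, and $\omega\otimes\omega$ also has zero $\xi^\perp$-block. Hence both pieces can be absorbed by $\i(\xi\otimes\widehat X+\widehat X\otimes\xi)$ with
\[
\widehat X=-\i|\xi|^{-2}\Bigl(\widehat H\xi-\tfrac12(\omega\cdot\widehat H\omega-\gamma)\xi-\gamma\xi\Bigr)
\]
up to a linear combination of these terms. The remaining $\gamma P_\omega+\gamma\omega\otimes\omega=\gamma\Id$ gives $\widehat\beta=\gamma$. One checks directly that $\gamma=(n-1)^{-1}\bigl(\tr\widehat H-\omega\cdot\widehat H\omega\bigr)$. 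All formulas are measurable in $\xi$, homogeneous of degree $0$ in $\widehat H$, and satisfy $\overline{\widehat X(-\xi)}=\widehat X(\xi)$, so $X$ and $\beta$ are real.

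\textbf{Regularity.} We have $|\widehat\beta|\le C|\widehat H|$, so $\beta\in L^2$. Also $|\xi|\,|\widehat X|\le C|\widehat H|$, so $\nabla X\in L^2$. Near $\xi=0$, $\widehat H$ is smooth and bounded because $H$ is compactly supported, so $|\widehat X|\le C|\xi|^{-1}$ there. This is square integrable since $n\ge3$, and therefore $X\in H^1$.

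\textbf{Compact support (main obstacle).} The construction is nonlocal, so the real issue is showing that $X$ and $\beta$ can be chosen compactly supported. I would take the divergence of \eqref{75}:
\[
\Delta X+\nabla\diver X+\nabla\beta=\diver H .
\]
This lets us read off the scalars from the trace relations. Outside a large ball containing $\supp H$, we have $DX+DX^\top=-\beta\e$, so $X$ is a conformal Killing field there. For $n\ge3$, conformal Killing fields on connected open sets are finite-dimensional polynomial fields of degree at most $2$. Since $X\in L^2$ near infinity (its Fourier transform is $L^2$ away from the origin, and $|\xi|^{-1}$ is $L^2$ near the origin in $n\ge3$), this polynomial field must vanish on the unbounded component of the complement. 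Then $\beta=-\tfrac1n\diver X\cdot 2=0$ there as well. Bounded components of the complement of the ball are excluded by taking the ball convex. Hence $X$ and $\beta$ vanish outside the ball, which gives the claim. The delicate point is making the conformal Killing rigidity rigorous at the low regularity $X\in H^1$. I would handle this by mollifying: the equation is preserved away from $\supp H$, and the resulting smooth conformal Killing fields are polynomials that converge in $L^2_{\rm loc}$.
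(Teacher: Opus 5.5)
Your proposal is correct. There is nothing in this paper to compare it against: the lemma is imported from \cite[Lemma~3.14]{MS26} without a proof.

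Your argument is complete on its own terms. The pointwise decomposition $\widehat H(\xi)=\gamma\Id+\omega\otimes v+v\otimes\omega$ is unique, so it determines $(\widehat X,\widehat\beta)$ canonically from $\widehat H$. That canonical dependence is exactly what Proposition~\ref{pr2} uses when it keeps the same $X,\beta$ for the modified background $\tilde a$. The assumption $n\ge3$ makes $|\xi|^{-1}$ square integrable near $0$, which gives $X\in L^2$ and hence $X\in H^1$. Liouville rigidity of conformal Killing fields, combined with $X\in L^2$, then gives vanishing outside any ball containing $\supp H$. This reaches only the unbounded component of $\R^n\setminus\supp H$, which is all the lemma asserts. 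It is also why Proposition~\ref{pr2} still needs the variation of the background to handle bounded complementary components of $M$.

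A few slips should be fixed.
\begin{itemize}
\item Your displayed decomposition of $S$ is wrong as written. Its last two terms cancel. With $T=S-\gamma\Id$, what remains, $\gamma\Id+T\omega\otimes\omega+\omega\otimes T\omega$, overcounts the $\omega\otimes\omega$ component by $(\omega\cdot T\omega)\,\omega\otimes\omega$. The correct identity is $S=\gamma\Id+v\otimes\omega+\omega\otimes v$ with $v=T\omega-\tfrac12(\omega\cdot T\omega)\omega$.
\item Your final formula for $\widehat X$ equals $-\i|\xi|^{-1}v$, so it is right. Only the intermediate display and the garbled ansatz $p\,\omega+\mu\,\omega$ (presumably $p+\mu\omega$) need correcting.
\item The formulas are linear in $\widehat H$, with multipliers homogeneous in $\xi$ of degrees $-1$ and $0$. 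They are not ``homogeneous of degree $0$ in $\widehat H$''.
\end{itemize}

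The mollification step is simpler than you suggest. Let $\supp H\subset\overline{B_R}$. Each $X_\eps=\rho_\eps*X$ is a smooth conformal Killing field on the connected set $\R^n\setminus\overline{B_{R+\eps}}$ and is square integrable there. It is therefore a polynomial in $L^2$ of an exterior domain, hence identically zero, and so is $\beta_\eps=-\tfrac2n\diver X_\eps$. Letting $\eps\to0$ in $L^2$ gives $X=\beta=0$ outside $\overline{B_R}$. There is no need to track convergence of polynomials, and neither the divergence identity nor the remark about convexity is used.
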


The next lemma is the Helmholtz version of \cite[Lemma~3.15]{MS26}. It proves injectivity of the linearization for conformal perturbations.
\begin{lemma}\label{lemma18}
Let $a>0$ be smooth on $\R^n$ and equal to one outside a ball, and set $g_0=a^{4/(n-2)}\e$. Suppose that $\sigma\in L^2(\R^n)$ has support in the interior of a closed ball $\B$ and
\begin{equation}\label{78}
\int_{\R^n}\sigma\left(a^2\nabla u\cdot\nabla v-\frac{nk^2}{n-2}|g_0|^{1/2}uv\right) \d x=0
\end{equation}
for every pair $u,v\in C^\infty(\R^n)$ satisfying $(\Delta_{g_0}+k^2)u=(\Delta_{g_0}+k^2)v=0$ on a neighborhood of $\B$. Then $\sigma=0$.
\end{lemma}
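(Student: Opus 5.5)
The plan is to eliminate the gradient term in \r{78} using the background equation, substitute products of complex geometric optics solutions from Lemma~\ref{lemma10} with a \emph{fixed} Fourier frequency $\xi$ and $|\zeta|\to\infty$, and pass to the limit. This should show that $\sigma$ is annihilated by a second order elliptic operator with smooth coefficients, and unique continuation then forces $\sigma=0$.

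\emph{Step 1: remove the gradient term.} Since $A_0=a^2\Id$ and $L_{A_0}=|g_0|^{1/2}\Delta_{g_0}$, solutions of the background Helmholtz equation satisfy $L_{A_0}u=-k^2|g_0|^{1/2}u$. Expanding $\diver(a^2\nabla(uv))$ gives, near $\B$,
\[
a^2\nabla u\cdot\nabla v=\tfrac12L_{A_0}(uv)+k^2|g_0|^{1/2}uv .
\]
Substituting this into \r{78} turns the hypothesis into
\[
\int_{\R^n}\sigma\Bigl(\tfrac12L_{A_0}(uv)-\tfrac{2k^2}{n-2}|g_0|^{1/2}uv\Bigr)\,\d x=0 .
\]
Thus only the product $w=uv$ enters, through the operator $Pw=\frac12L_{A_0}w-\frac{2k^2}{n-2}|g_0|^{1/2}w$.

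\emph{Step 2: CGO products with fixed frequency.} Fix $\xi\in\R^n$. Choose orthonormal $e_1,e_2\perp\xi$ and, for large $\tau$, set
\[
\zeta=-\tfrac{\i}{2}\xi+\tau e_1+\i\sqrt{\tau^2+|\xi|^2/4}\,e_2,\qquad \eta=-\i\xi-\zeta .
\]
Then $\zeta\cdot\zeta=\eta\cdot\eta=0$, $\zeta+\eta=-\i\xi$, and $|\zeta|,|\eta|\to\infty$. Lemma~\ref{lemma10} with $\ell=2$ provides $u=u^0_\zeta$ and $v=u^0_\eta$, solving the background equation near $\B$, with
\[
uv=a^{-2}e^{-\i x\cdot\xi}(1+R_\tau),\qquad \|R_\tau\|_{C^2(\B)}\le C/\tau .
\]
Since $P$ is a fixed second order operator and $\sigma\in L^2$ is supported in $\B$, we get $\bigl|\int\sigma\,P(a^{-2}e^{-\i x\cdot\xi}R_\tau)\bigr|\le C(\xi)\|\sigma\|_{L^2}/\tau\to0$. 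Letting $\tau\to\infty$ yields
\[
\int\sigma\,P\bigl(a^{-2}e^{-\i x\cdot\xi}\bigr)\,\d x=0\qquad\text{for all }\xi\in\R^n .
\]

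\emph{Step 3: unique continuation.} Let $Q=P\circ a^{-2}$ and let $Q^t$ be its formal transpose. Step 2 says that the compactly supported distribution $Q^t\sigma$ has vanishing Fourier transform, so $Q^t\sigma=0$ on $\R^n$. The principal part of $Q^t$ is $\frac12\Delta$ times a positive smooth function, and its coefficients are smooth, so $Q^t$ is elliptic. Elliptic regularity then makes $\sigma$ smooth. Since $\sigma$ vanishes outside $\B$, weak unique continuation for second order elliptic operators with smooth coefficients on the connected set $\R^n$ gives $\sigma=0$.

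The step needing the most care is Step 2. We must check that Lemma~\ref{lemma10} applies to this family: $|\zeta|,|\eta|\ge\tau_*$ for large $\tau$, with constants uniform in $\zeta$. We must also check that the $C^2$ remainder bound controls the second derivatives in $P$, with $\xi$ held fixed while $\tau\to\infty$. Using $uv$ rather than expanding $\nabla u\cdot\nabla v$ directly is what avoids the $O(1)$ cross terms $\zeta\cdot\nabla r^0_\eta$, which do not decay.
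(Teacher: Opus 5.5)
Your argument is the paper's proof: the same product identity $L_{A_0}(uv)=2a^2\nabla u\cdot\nabla v-2k^2|g_0|^{1/2}uv$, then CGO products from Lemma~\ref{lemma10} with $\zeta+\eta=-\i\xi$ and $C^2$ remainders, which show that the Fourier transform of the compactly supported distribution $a^{-2}\bigl(L_{A_0}\sigma-\frac{4k^2}{n-2}|g_0|^{1/2}\sigma\bigr)$ vanishes, followed by elliptic regularity and unique continuation. The only difference is cosmetic: you keep $P$ on the test-function side until after the limit and then transpose, whereas the paper integrates by parts first. This lets you pair $\sigma\in L^2$ with a continuous function instead of pairing an $H^{-2}$ distribution with $C^2$ functions.

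One slip needs fixing. With $\sqrt{\tau^2+|\xi|^2/4}$, your phases satisfy $\zeta\cdot\zeta=\eta\cdot\eta=-|\xi|^2/2\neq0$, so Lemma~\ref{lemma10} does not apply as written. Use $\sqrt{\tau^2-|\xi|^2/4}$ for $\tau\ge|\xi|/2$, or put $\sqrt{\tau^2+|\xi|^2/4}$ on $e_1$ and $\tau$ on $\i e_2$. Then both phases are isotropic with moduli comparable to $\tau$, and the rest of your argument goes through unchanged.
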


\begin{proof}
The product identity is now
\[
L_{A_0}(uv)=2a^2\nabla u\cdot\nabla v-2k^2|g_0|^{1/2}uv.
\]
Substituting this into \r{78} and integrating by parts yields
\begin{equation}\label{79}
\left\langle a^{-2}\left(L_{A_0}\sigma-\frac{4k^2}{n-2}|g_0|^{1/2}\sigma\right),(au)(av)\right\rangle=0.
\end{equation}
The distribution in this pairing belongs to $H^{-2}$ and has compact support. Using the background construction of Lemma~\ref{lemma10} with two derivatives on $\B$, the complex geometric optics limiting argument of \cite[Lemma~3.15]{MS26} shows that its Fourier transform vanishes. Hence
\[
L_{A_0}\sigma-\frac{4k^2}{n-2}|g_0|^{1/2}\sigma=0.
\]
Elliptic regularity makes $\sigma$ smooth. Since $\sigma$ has compact support, unique continuation (or the negativity of the associated quadratic form) implies $\sigma=0$.
\end{proof}

The next proposition identifies the kernel of the linearized Helmholtz DN map, expressed in conductivity variables, with the potential tensors. It proves injectivity of the full linearization modulo infinitesimal boundary-fixing diffeomorphisms. It is the Helmholtz version of \cite[Proposition~3.16]{MS26}; its $L^2$ formulation is needed for the limit in Lemma~\ref{lemma15}.

\begin{proposition}\label{pr2}
A real symmetric tensor $H\in L^2(M)$ satisfies
\begin{equation}\label{81}
\int_M\left(a^2H^{ij}(\partial_i u)\,(\partial_j v)-\frac{k^2}{n-2}|g_0|^{1/2}(\tr H)uv\right)\,\d x=0
\end{equation}
for all $u,v\in C^\infty(M)$ satisfying $(\Delta_{g_0}+k^2)u=(\Delta_{g_0}+k^2)v=0$ if and only if
\begin{equation}\label{82}
H=\mathcal G_aX\qquad\text{for some }X\in H_0^1(M;\R^n).
\end{equation}
\end{proposition}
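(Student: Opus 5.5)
The "if" direction is exactly Lemma~\ref{lemma16}: for $X\in H_0^1(M;\R^n)$, identity \r{73} is \r{81} with $H=\mathcal G_aX$. So the work is in the "only if" direction, and I would follow the outline announced before Lemma~\ref{lemma17}.

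\emph{Step 1: reduce to a Fourier condition.} Extend $H$ by zero to $\R^n$. Take $u=u_\zeta^0$ and $v=u_\eta^0$ from Lemma~\ref{lemma10}, with $\zeta=\rho z_\ell$, $\eta=\rho w_\ell$ and $\zeta+\eta=-\i\xi$, using the vectors of Lemma~\ref{lemma12}. Insert these into \r{81} and divide by $\rho^2$. Since $\partial u_\zeta^0=a^{-1}e^{x\cdot\zeta}(\zeta+O(1))$, the leading term is $\widehat H(\xi)(z_\ell,w_\ell)$. The $(\tr H)uv$ term and the $r^0$ corrections are $O(\rho^{-1})$ pointwise in $\xi$, since $H\in L^1$. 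One subtlety is that $\rho=|\xi|$ is tied to $\xi$; to avoid this, I would use the standard device of letting $\zeta,\eta$ have a large free parameter $\tau$ with $\zeta+\eta=-\i\xi$ fixed, as in \cite[Lemma~3.14]{MS26}. Letting $\tau\to\infty$ then gives $\widehat H(\xi)(z,w)=0$ for all isotropic pairs in the relevant family. This is exactly the statement that the restriction of $\widehat H(\xi)$ to $\xi^\perp$ is a multiple of $\e$. Lemma~\ref{lemma17} then yields $H=DX+DX^\top+\beta\e$ with compactly supported $X\in H^1$. Using \r{72}, rewrite this as $H=\mathcal G_aX+\sigma\e$ with $\sigma=\beta+\diver X+2X\cdot\nabla\log a\in L^2$.

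\emph{Step 2: eliminate $\sigma$.} Lemma~\ref{lemma16}, on $\R^n$ with compactly supported $X$, shows that $\mathcal G_aX$ contributes zero to the pairing for any $u,v$ solving the background equation near $\B$. Since $\tr(\sigma\e)=n\sigma$, the remainder $\sigma\e$ satisfies \r{78}. Its support lies in $\supp H\cup\supp X$, so choose $\B$ to contain it. Lemma~\ref{lemma18} then gives $\sigma=0$, so $H=\mathcal G_aX$ on $\R^n$.

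\emph{Step 3: show $X$ vanishes outside $M$.} This is the main obstacle. Outside $M$ we have $\mathcal G_aX=0$. Setting $Y=a^{\gamma}X$ for a suitable power $\gamma$ (I expect $\gamma$ related to $4/(n-2)$), this becomes the conformal Killing equation for the metric $g_0$. Near infinity, where $a=1$, it is the Euclidean conformal Killing equation. Conformal Killing fields are finite-dimensional and determined by a finite jet, and on $\R^n$ with $n\ge3$ they are polynomial. Hence a compactly supported such $X$ vanishes on the unbounded component of $\R^n\setminus M$.

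The delicate points are the low regularity of $X$ and possible bounded components of the complement, since $M$ is a domain whose complement may be disconnected. I would first argue that $X$ is smooth outside $M$, because the overdetermined conformal Killing system is elliptic. Then I would use unique continuation for conformal Killing fields (vanishing of a finite jet at a point) on each component. For a bounded component $\Omega$, the Helmholtz identity \r{74} applied to $u=x^j$-type solutions is not directly available. I would first try to show $X|_{\partial\Omega}=0$ from the $H^1$ trace. Then a conformal Killing field on $\Omega$ vanishing on $\partial\Omega$ has vanishing jet at the boundary and is therefore zero.

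With $X=0$ outside $M$ and $X\in H^1$, we get $X\in H_0^1(M;\R^n)$, which proves \r{82}.
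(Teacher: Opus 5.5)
Your Steps 1 and 2 agree with the paper, and your argument on the unbounded component of $\R^n\setminus M$ is sound. The rescaling $Y=a^\gamma X$ is not needed, and as stated it is not correct. The trace-free part of $\mathcal G_aX$ equals that of $DX+DX^\top$. So $\mathcal G_aX=0$ already says that $X$ itself is a Euclidean conformal Killing field with $\diver(|g_0|^{1/2}X)=0$, i.e.\ a Killing field of $g_0$. Multiplying by $a^\gamma$ would add the non-scalar term $\nabla a^\gamma\otimes X+X\otimes\nabla a^\gamma$.

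\textbf{The gap: bounded components.} Let $\Omega$ be a bounded component of $\R^n\setminus M$. Your plan to get $X|_{\partial\Omega}=0$ ``from the $H^1$ trace'' has no mechanism behind it, and nothing produced by Steps 1--2 can supply one. Those steps use \eqref{81} only for solutions defined near $\B$. At that point you know only that $H=\mathcal G_aX$ on $\R^n$ with $X\in H^1$ compactly supported and $\supp H\subset M$. Here is an example with all of these properties where $X\neq0$ on $\partial\Omega$:
\begin{itemize}
\item take $c=0$ and $M=\{1\le|x|\le2\}$;
\item take $X=\chi(|x|)Rx$ with $R$ skew, $\chi=1$ on $[0,1]$ and $\chi=0$ on $[2,\infty)$.
\end{itemize}
Then $\mathcal G_1X=0$ off $M$. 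By Lemma~\ref{lemma16}, $H=\mathcal G_1X$ satisfies \eqref{81} for every pair of solutions defined near $\B$. Yet $X\ne0$ on the inner sphere. What excludes such $H$ is that \eqref{81} is assumed for \emph{all} solutions in $M$, including those that do not extend across $\Omega$. Your argument never uses these.

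\textbf{How the paper closes it.} It varies the background off $M$. Take real $\varphi\in C_0^\infty(\R^n\setminus M)$; it may be supported in a bounded component. The background $\tilde a=ae^\varphi$, $\tilde g_0=\tilde a^{4/(n-2)}\e$, has the same conductivity and volume density on $M$. So \eqref{81} holds for it unchanged, and its Helmholtz solutions near $\B$ restrict to admissible test functions in $M$. Lemma~\ref{lemma17} produces $X,\beta$ from $H$ alone. Rerunning your Step 2 with $\tilde a$ therefore gives $H=\mathcal G_{\tilde a}X$ with the same $X$. Hence $0=\mathcal G_{\tilde a}X-\mathcal G_aX=-2(X\cdot\nabla\varphi)\e$ for every such $\varphi$. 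Choosing $\varphi=x_i\psi$, with $\psi=1$ on a small ball off $M$, gives $X_i=0$ there. Thus $X=0$ a.e.\ on all of $\R^n\setminus M$ at once, bounded components included, with no regularity or jet arguments for conformal Killing fields.
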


\begin{proof}
The reverse implication is Lemma~\ref{lemma16}. For the forward implication, extend $H$ by zero to $\R^n$. Fix $\xi\ne0$ and orthonormal $p,q\in\xi^\perp$, and test \r{81} with the background complex geometric optics solutions with phases $\zeta_t,\eta_t$ used in \cite[Proposition~3.16]{MS26}. Since $\zeta_t+\eta_t=-\i\xi$, the additional zeroth-order term, after division by $t^2$, is $O(k^2t^{-2}\|H\|_{L^2})$ by \r{43}. Thus the same limit yields
\begin{equation}\label{83}
\widehat H(\xi)(p+\i q,p+\i q)=0.
\end{equation}
As in that proposition, the restriction of $\widehat H(\xi)$ to $\xi^\perp$ is scalar. Lemma~\ref{lemma17} supplies compactly supported $X,\beta$, chosen using only $H$, and
\begin{equation}\label{84}
H-\mathcal G_aX=\sigma\e,\qquad \sigma=\beta+\diver X+2X\cdot\nabla\log a.
\end{equation}
The scalar $\sigma$ belongs to $L^2$ and has compact support. For background Helmholtz solutions near a ball containing $M\cup\supp X\cup\supp\sigma$ in its interior, subtracting \r{73} from \r{81} gives \r{78}. Lemma~\ref{lemma18} therefore implies $\sigma=0$.

To prove that $X$ vanishes outside $M$, we vary the background there as in \cite[Proposition~3.16]{MS26}. For real $\varphi\in C_0^\infty(\R^n\setminus M)$, set $\tilde a=ae^\varphi$ and $\tilde g_0=\tilde a^{4/(n-2)}\e$. Both the conductivity and volume density agree with those of $g_0$ on $M$. Hence \r{81} holds for this background as well. Keeping the same $X,\beta$ and repeating the scalar argument with $\tilde a$ gives $H=\mathcal G_{\tilde a}X$, and therefore
\begin{equation}\label{85}
0=\mathcal G_{\tilde a}X-\mathcal G_aX=-2(X\cdot\nabla\varphi)\e.
\end{equation}
Since \r{85} holds for every such $\varphi$, the support argument of \cite[Proposition~3.16]{MS26} implies that $X=0$ outside $M$, including its bounded complementary components. This proves \r{82}.
\end{proof}

\subsection{Completion of the proof}\label{sec3j}
\begin{proof}[Proof of Theorem~\ref{thm2}]
Suppose the conclusion fails. As in the proof of \cite[Theorem~2]{MS26}, apply Lemmas~\ref{lemma6}--\ref{lemma7} to a sequence of metrics $g_j\to g_0$ in $H^s(M)$ with $\Lambda_{g_j}(k)=\Lambda_{g_0}(k)$ which are not related to $g_0$ by boundary-fixing diffeomorphisms. Write $\overline{g}_j=(\Phi_j)_*g_j$, with perturbations $B_j$ as in \r{20}. Then $b_{s,j}\to0$ by \r{29}, and $B_j\ne0$, since $B_j=0$ would imply $\overline{g}_j=g_0$. Lemma~\ref{lemma15} yields, after passing to a subsequence,
\[
H_j=B_j/b_{2,j}\longrightarrow H\quad\text{in }L^2,\qquad \|H\|_{L^2}=1.
\]
Set $A_j=A_{\overline{g}_j}=a^2(\Id+B_j)$. By \r{35}, $A_j\to A_0$ and $|\overline{g}_j|^{1/2}\to|g_0|^{1/2}$ in $L^\infty(M)$. Since
\[
|\overline{g}_j|^{1/2}=|g_0|^{1/2}\det(\Id+B_j)^{1/(n-2)},
\]
Taylor's formula implies
\begin{equation}\label{86a}
\left\|\frac{|\overline{g}_j|^{1/2}-|g_0|^{1/2}}{b_{2,j}}-\frac{|g_0|^{1/2}}{n-2}\tr H_j\right\|_{L^2(M)}\le Cb_{\infty,j}\longrightarrow0.
\end{equation}
Thus the normalized volume-density differences converge to $|g_0|^{1/2}\tr H/(n-2)$ in $L^2(M)$.

Fix $u,v\in C^\infty(M)$ satisfying $(\Delta_{g_0}+k^2)u=(\Delta_{g_0}+k^2)v=0$, and let $u_j$ solve $(\Delta_{\overline{g}_j}+k^2)u_j=0$ with boundary value $u$. Then
\[
\bigl(L_{A_j}+k^2|\overline{g}_j|^{1/2}\bigr)(u_j-u) =-\diver(a^2B_j\nabla u)-k^2\bigl(|\overline{g}_j|^{1/2}-|g_0|^{1/2}\bigr)u,
\qquad (u_j-u)|_{\partial M}=0.
\]
By the assumption on $k^2$, the Dirichlet inverses are uniformly bounded from $H^{-1}(M)$ to $H_0^1(M)$, as in Lemma~\ref{lemma6}. Therefore
\begin{equation}\label{86}
\|u_j-u\|_{H^1(M)}\le Cb_{2,j},
\end{equation}
where $C$ may depend on $u$. Dividing the Alessandrini identity \r{19} by $b_{2,j}$, replacing $u_j$ by $u$ using \r{86}, and applying \r{86a}, we obtain
\begin{equation}\label{87}
\left|\int_M\left(a^2H_j^{ij}(\partial_i u)\,(\partial_j v)-\frac{k^2}{n-2}|g_0|^{1/2}(\tr H_j)uv\right)\,\d x\right| \le C(b_{2,j}+b_{\infty,j})\longrightarrow0.
\end{equation}
Passing to the limit in \r{87} proves \r{81} for $H$. Proposition~\ref{pr2} therefore implies $H=\mathcal G_aX$ for some $X\in H_0^1(M;\R^n)$.

Let $Z_j$ be the normal fields from Lemma~\ref{lemma6} and set $V_j=Z_j/b_{2,j}$. By \r{22}, this sequence is bounded in $H_0^1(M;\R^N)$, and \r{21}, \r{27} imply
\begin{equation}\label{88}
\bigl(L_{A_j}+k^2|\overline{g}_j|^{1/2}\bigr)V_j =-\diver(a^2H_j\nabla F) -k^2\frac{|\overline{g}_j|^{1/2}-|g_0|^{1/2}}{b_{2,j}}F,
\qquad DF^\top V_j=0.
\end{equation}
After passing to a further subsequence, $V_j\rightharpoonup V$ in $H_0^1$. Using the $L^\infty$ convergence of the coefficients, $H_j\to H$ in $L^2$, and \r{86a}, we pass to the weak formulation of \r{88} and obtain
\begin{equation}\label{89}
\begin{split}
\bigl(L_{A_0}+k^2|g_0|^{1/2}\bigr)V
&=-\diver(a^2H\nabla F)-\frac{k^2}{n-2}|g_0|^{1/2}(\tr H)F,\\
DF^\top V&=0.
\end{split}
\end{equation}
Each component of $F$ solves the background Helmholtz equation. Lemma~\ref{lemma16}, applied to $H=\mathcal G_aX$, shows that $-DFX\in H_0^1(M;\R^N)$ solves the same Dirichlet problem as $V$ in \r{89}. Since $k^2$ is not a Dirichlet eigenvalue of $-\Delta_{g_0}$, we have $V=-DFX$. The final argument is the same as in the proof of \cite[Theorem~2]{MS26}: normality implies $DF^\top DF\,X=0$, and the full rank of $DF$ from Proposition~\ref{prop:embedding} forces $X=0$. Hence $H=0$, contradicting $\|H\|_{L^2}=1$.
\end{proof}

\section{Proof of Corollary~\ref{cor2}}
By the strict domain monotonicity of the Dirichlet eigenvalues, we can choose a slightly larger concentric ball $B'\Supset\overline B$, if needed, so that $k^2$ is not a Dirichlet eigenvalue of $-\Delta_{g_0}$ in $B'$. The same holds for $g$ when $\eps_0$ is sufficiently small.

The next argument is standard in inverse scattering. Equality of the scattering matrices implies equality of the DN maps on $\partial B'$; see \cite[Section~1, p.~237]{Uhlmann00}. Indeed, Rellich's lemma shows that the scattering solutions with the same incident plane wave coincide outside $B$. Their Cauchy data therefore agree on $\partial B'$. By duality and unique continuation, the linear span of their Dirichlet traces is dense in $H^{1/2}(\partial B')$, which proves the assertion.  
See also \cite[Chapter~IV, Theorem~5.1]{Stefanov23}.

Theorem~\ref{thm2}, applied in $B'$, yields $g=\Phi^*g_0$ with $\Phi|_{\partial B'}=\Id$. Since $\Phi$ is an isometry fixing the boundary pointwise and preserving the interior, $D\Phi$ fixes tangent vectors and the inward unit normal on $\partial B'$. The inward normal geodesics for both metrics are radial segments in $B'\setminus\overline B$, so uniqueness for the geodesic equation implies $\Phi(r,\omega)=(r,\omega)$ there, in polar coordinates about the common center of the balls. Extending $\Phi$ by the identity outside $B'$ proves the corollary.


\end{document}